\newtheorem{thm}{Theorem}[section]
\newtheorem{prop}[thm]{Proposition}
\newtheorem{prob}[thm]{Problem}
\newtheorem{dfn}[thm]{Definition}
\newtheorem{clm}[thm]{Claim}
\newtheorem{lemma}[thm]{Lemma}
\newtheorem{fact}[thm]{Fact}
\newtheorem{cor}[thm]{Corollary}
\newtheorem{question}[thm]{Question}
\newcommand{\omg}{{\omega_1}}
\newcommand{\F}{\varphi}
\newcommand{\B}{\mathcal{W}}
\newcommand{\R}{[\mathbb{R}]^{<\omega}}
\newcommand{\U}{U^\alpha_\gamma}
\newcommand{\V}{U^{<\alpha}_\gamma}
\begin{document}

\title{A counterexample in the theory of $D$-spaces}
\author {D\'aniel T. Soukup \and Paul J. Szeptycki}
\address{Institute of Mathematics, E\"otv\"os Lor\'and University, Budapest, Hungary}
\email{daniel.t.soukup@gmail.com}
\address{
Department of Mathematics and Statistics, York University, Toronto, ON M3J 1P3 Canada}
\email{szeptyck@yorku.ca}

\thanks{The second author acknowledges support from NSERC grant 238944}
\thanks{Corresponding author: D\'aniel T. Soukup}

\keywords{$D$-spaces, Lindel\"of spaces}
\subjclass[2010]{54D20, 54A35}

\begin{abstract}
Assuming $\diamondsuit$, we construct a $T_2$ example of a hereditarily Lindel\"of space of size $\omega_1$ which is not a $D$-space. The example has the property that all finite powers are also Lindel\"of.
\end{abstract}

\maketitle

\section{Introduction} 

The following notion is due to van Douwen, first studied with Pfeffer in \cite{vD}.

\begin{dfn}
 A $T_1$ space $X$ is said to be a {\em $D$-space} if for each open neighbourhood assignment $\{U_x:x\in X\}$ there is a closed and discrete subset $D\subseteq X$ such that $\{U_x:x\in D\}$ covers the space.
\end{dfn}

The question whether every regular Lindel\"of space is $D$ has been attributed to van Douwen \cite{G}. Moreover, van Douwen and Pfeffer pointed out that 

\begin{quote}
"No satisfactory example of a space which is not a $D$-space is known, where by satisfactory example we mean an example having a covering property at least as strong as metacompactness or subparacompactness." 
\end{quote}

Indeed, the lack of satisfactory examples of $D$-spaces satisfying some interesting covering properties continues and there has been quite a bit of activity in the area in the last decades (see the surveys \cite{E} and \cite{G} for other related results and open problems). Whether regular Lindel\"of spaces are $D$-spaces was listed as Problem 14 in Hru\v s\'ak and Moore's list of 20 open problems in set-theoretic topology \cite{HM}, and there are no consistency results in either direction even for hereditarily Lindel\"of spaces. The question whether Lindel\"of implies $D$ for the class of $T_1$ spaces was also open and explicitly asked in \cite{FS} and more recently in \cite{A}. 

In this note, assuming $\diamondsuit$, we construct an example of a hereditarily Lindel\"of $T_2$ space that is not a $D$-space. The example also has the property that every finite power is Lindel\"of, but we do not know if it can be made regular. 

The article is structured as follows; in Section \ref{pre} we gather a few general facts and definitions, and in Section \ref{constr} we present the construction. In Section \ref{further}, we make some remarks and prove further properties of our construction. Finally, in Section \ref{quest} we state a few open problems.

\section{Preliminaries}\label{pre}

 Delicate use of elementary submodels play crucial role in our arguments. We do not intend to give a precise introduction to this powerful tool since elementary submodels are widely used in topology nowadays; let us refer to \cite{Do}. However, we present here a few easy facts and a lemma which could serve as a warm-up exercise for the readers less involved in the use of elementary submodels. 

Let $H(\vartheta)$ denote the sets which have transitive closure of size less than $\vartheta$ for some cardinal $\vartheta$. The following facts will be used regularly without explicitly referring to them.

\begin{fact}
Suppose that $M\prec H(\vartheta)$ for some cardinal $\vartheta$ and $M$ is countable.
\begin{enumerate}[(a)]
 \item If $\mathcal{F}\in M$ and there is $F\in \mathcal{F}\setminus M$ then $\mathcal{F}$ is uncountable.
\item If $B\in M$ and $B$ is countable then $B\subseteq M$.
\end{enumerate}
 
\end{fact}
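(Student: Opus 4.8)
The plan is to prove part (b) first and then obtain part (a) as an immediate consequence. For (b), the first observation is that $\omega\subseteq M$: the set $\omega$ is definable in $H(\vartheta)$ (it is the least limit ordinal), so $\omega\in M$ by elementarity, and since each natural number $n$ is definable in $H(\vartheta)$ from the parameter $\omega$ (using $0=\emptyset$ and $n+1=n\cup\{n\}$), an easy induction gives $n\in M$ for all $n\in\omega$. Now let $B\in M$ be countable; we may assume $B\neq\emptyset$, the empty case being trivial. Since $\vartheta$ is taken large enough, $H(\vartheta)$ contains a surjection $g\colon\omega\to B$, so $H(\vartheta)\models$ ``there is a function from $\omega$ onto $B$''; by elementarity we may pick such a $g$ in $M$. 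For each $n\in\omega$ both $g\in M$ and $n\in M$, and $g(n)$ is definable in $H(\vartheta)$ from $g$ and $n$, so $g(n)\in M$. Hence $B=\{g(n):n\in\omega\}\subseteq M$, as required.

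For (a), suppose $\mathcal{F}\in M$ and fix $F\in\mathcal{F}\setminus M$. Were $\mathcal{F}$ countable, part (b) would yield $\mathcal{F}\subseteq M$, contradicting $F\notin M$; hence $\mathcal{F}$ is uncountable.

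The one point that needs care --- and the only place the argument is not completely mechanical --- is the role of the ambient cardinal $\vartheta$: it must be chosen large enough (any regular $\vartheta>2^{\omega}$ is more than sufficient) so that a witnessing surjection $g\colon\omega\to B$ actually belongs to $H(\vartheta)$ and so that $H(\vartheta)$ correctly computes countability; otherwise the statement ``$B$ is countable'' would not reflect into $M$. Granting this, everything reduces to the standard principle that an element of $H(\vartheta)$ definable from parameters in $M$ again lies in $M$, applied here to the values $g(n)$. I do not anticipate any genuine obstacle beyond this routine bookkeeping.
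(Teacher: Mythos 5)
Your proof is correct and is the standard argument: part (b) via an onto map $g\colon\omega\to B$ obtained by elementarity together with $\omega\subseteq M$, and part (a) as the contrapositive of (b). The paper states this Fact without proof (it is listed among facts ``used regularly without explicitly referring to them''), so there is nothing to compare against; your handling of the side conditions on $\vartheta$ is fine.
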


The next lemma is well-known, nonetheless we present a proof.

\begin{lemma}\label{delta}
 Let $\mathcal{F}\subseteq [\omg]^{<\omega}$ and suppose that $M$ is a countable elementary submodel of $H(\vartheta)$ for some cardinal $\vartheta$ such that $\mathcal{F}\in M$. If there is an $F\in \mathcal{F}$ such that $F\notin M$ then there is an uncountable $\Delta$-system $\mathcal{G}\subseteq \mathcal{F}$ in $M$ with kernel $F\cap M$.

 Moreover, if $\psi(x,...)$ is any formula with parameters from $M$ and $\psi(F,...)$ holds then $\mathcal{G}$ can be chosen in such a way that $\psi(G,...)$ holds for every $G\in \mathcal{G}$, as well.
\end{lemma}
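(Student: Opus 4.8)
The plan is to build the required family explicitly inside $H(\vartheta)$, using that $F$ is ``generic'' over $M$, and then to transfer its existence back into $M$ by elementarity. The point to be careful about --- and the main obstacle --- is that a bare appeal to the $\Delta$-system lemma is useless here: it produces an uncountable $\Delta$-subsystem of $\mathcal{F}$ with no control over the kernel, whereas we must land on the kernel $F\cap M$ \emph{exactly}. To get around this I would peel off $F\cap M$ first and instead produce, among the ``tails'', an uncountable \emph{pairwise disjoint} family --- a statement strong enough to reflect with the correct kernel.

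First some harmless reductions, all carried out inside $M$. Let $\bar a$ list the parameters of $\psi$ and replace $\mathcal{F}$ by $\{F'\in\mathcal{F}:|F'|=|F|\text{ and }\psi(F',\bar a)\}$; this family is definable from objects of $M$ and still contains $F$, so from now on every member of $\mathcal{F}$ has size $n:=|F|$ and satisfies $\psi$. Since $\mathcal{F}\in M$ has the element $F\notin M$, part (a) of the Fact above shows $\mathcal{F}$ is uncountable. Write $\delta=M\cap\omega_1$, a countable ordinal; as $M$ contains all its finite subsets whereas $F\notin M$, the set $s:=F\cap M=F\cap\delta$ is a \emph{proper} subset of $F$ lying in $M$, and we put $k=|s|$, $p=n-k\ge 1$. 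Restricting once more, set $\mathcal{F}_1:=\{F'\in\mathcal{F}:F'\cap(\max s+1)=s\}$ (interpreted as $\mathcal{F}$ itself when $s=\emptyset$); using $\max s\in M$ this is again a member of $M$, and since $\max s<\delta$ and $s=F\cap\delta$ one checks $F\in\mathcal{F}_1$. Every member of $\mathcal{F}_1$ has $s$ as an initial segment, so $\mathcal{H}:=\{F'\setminus s:F'\in\mathcal{F}_1\}$ belongs to $M$ (being definable from $\mathcal{F}_1,s\in M$) and consists of $p$-element subsets of $\omega_1\setminus(\max s+1)$; crucially, $G:=F\setminus s=F\setminus M$ is a member of $\mathcal{H}$ with $G\cap M=\emptyset$, and $G\notin M$ (otherwise $F=s\cup G\in M$).

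The heart of the matter is the claim that for every countable $W\subseteq\omega_1$ there is $H\in\mathcal{H}$ with $H\cap W=\emptyset$. If this failed, then $H(\vartheta)$ would satisfy ``there is a countable $W\subseteq\omega_1$ meeting every member of $\mathcal{H}$'', a statement whose only parameter, $\mathcal{H}$, lies in $M$; by elementarity $M$ would contain such a $W_0$, and being countable and in $M$ it would satisfy $W_0\subseteq M$ by part (b) of the Fact above --- whence $G\cap W_0\subseteq G\cap M=\emptyset$, contradicting that $W_0$ meets $G\in\mathcal{H}$. Granting the claim, I would build by recursion on $\xi<\omega_1$ sets $H_\xi\in\mathcal{H}$ with $H_\xi\cap\bigcup_{\eta<\xi}H_\eta=\emptyset$ (the union being countable), obtaining an uncountable pairwise disjoint family $\mathcal{D}'\subseteq\mathcal{H}$.

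It remains to lift and reflect. As the members of $\mathcal{D}'$ are pairwise disjoint and disjoint from $s$, the family $\mathcal{D}:=\{s\cup H:H\in\mathcal{D}'\}$ is a subfamily of $\mathcal{F}_1\subseteq\mathcal{F}$ consisting of $n$-element sets, any two of which meet exactly in $s$; thus $\mathcal{D}$ is an uncountable $\Delta$-system with kernel $s=F\cap M$, all of whose members satisfy $\psi$. Consequently the statement ``there exists an uncountable $\mathcal{D}\subseteq\mathcal{F}$ that is a $\Delta$-system with kernel $s$'' holds in $H(\vartheta)$; its parameters $\mathcal{F}$ and $s$ lie in $M$, so by elementarity it holds in $M$, and any witness $\mathcal{G}\in M$ is the family demanded by the lemma: an uncountable $\Delta$-system inside the original $\mathcal{F}$, with kernel $F\cap M$, all of whose members satisfy $\psi$ by the first reduction. (Note that neither $\mathcal{D}'$ nor $\mathcal{D}$ need lie in $M$; the concluding reflection is exactly what supplies a version that does.)
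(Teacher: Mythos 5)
Your proof is correct and follows essentially the same strategy as the paper's: peel off the kernel $F\cap M$, use elementarity together with the fact that $F\setminus M$ avoids $M$ to show the ``tails'' can dodge any countable set, recursively extract an uncountable disjoint family of tails, and then reflect the existence of the resulting $\Delta$-system back into $M$. The only cosmetic difference is that the paper phrases the avoidance step positively (``$\exists G\in\mathcal{F}_0$ with $G\cap\alpha=D$ for all $\alpha$ in a tail of $\omega_1$'') where you argue by contradiction against a countable transversal $W$; the content is the same.
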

\begin{proof}
Suppose that $\mathcal{F},M,F\in\mathcal{F}\setminus M$ and $\psi$ is as above. Let $D=F\cap M$ and let $\mathcal{F}_0=\{G\in \mathcal{F}:D\subseteq G \text{ and } \psi(G,...)\}$. Clearly, $F\in \mathcal{F}_0\in M$ and $F\notin M$ thus $\mathcal{F}_0$ is uncountable. Moreover, $F\cap \alpha = D$ for all $\alpha$ in a tail of $M\cap \omega_1$; that is, $\exists G\in \mathcal{F}_0 :G\cap \alpha=D$ and this holds in $M$ as well, by elementary. Thus $$M\models \exists \beta<\omg \forall \alpha\in(\beta,\omg)\exists G\in \mathcal{F}_0: G\cap \alpha=D.$$
Thus this holds in $H(\vartheta)$ as well, by elementary. Hence we can select inductively an uncountable $\Delta$-system from $\mathcal{F}_0$. Using elementary again, there is such a $\Delta$-system in $M$ too.

\end{proof}

For any set-theoretic notion, including background on $\diamondsuit$, see \cite{K}.\\

There are different conventions in general topology whether to add regularity to the definition of a Lindel\"of space. In this article, any topological space $X$ is said to be \emph{Lindel\"of} iff every open cover has a countable subcover; that is, no separation is assumed.

 Finally, we need a few other definitions. 

\begin{dfn}
A collection ${\mathcal U}$ of subsets of a space $X$ is called an 
{\em $\omega$-cover} if for every finite $F\subseteq X$ there is $U\in {\mathcal U}$ such that $F\subseteq U$. 
\end{dfn}

\begin{dfn}
  A collection ${\mathcal N}$ of subsets of a space $X$ is called a {\em local $\pi$-network at the point $x$} if for each open neighbourhood $U$ of $x$ in $X$, there is an $N\in {\mathcal N}$ such that $N\subseteq U$ (it is not required that the sets in ${\mathcal N}$ be open, nor that they contain the point $x$).  
\end{dfn}

 \section{The Construction} \label{constr}

We construct a topology by constructing a sequence $\{U_\gamma:\gamma<\omega_1\}$ of subsets of $\omega_1$ such that $\gamma\in U_\gamma$ for every $\gamma\in \omg$. The example will be obtained by first taking the family $\{U_\gamma:\gamma<\omega_1\}$ as a subbasis for a topology on $\omg$ and then refining it with a Hausdorff topology of countable weight.

The following lemma will be used to prove the Lindel\"of property.

\begin{lemma}\label{mainlemma}
 Consider a topology on $\omg$ generated by a family $\{U_\gamma:\gamma<\omg\}$ as a subbase; sets of the form $U_F=\bigcap\{U_\gamma:\gamma \in F\}$ for $F\in [\omg]^{<\omega}$ form a base. If for every uncountable family $B\subseteq [\omg]^{<\omega}$ of pairwise disjoint sets there is a countable $B'\subseteq B$ such that $$|\omg\setminus\bigcup\{U_F:F\in B'\}|\leq\omega$$ then the topology is hereditarily Lindel\"of.
\end{lemma}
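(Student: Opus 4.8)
The plan is to show that, under the stated hypothesis, every open subspace of $\omg$ is Lindel\"of; since a space is hereditarily Lindel\"of precisely when each of its open subspaces is Lindel\"of, this is enough. (Equivalently, one could argue via the standard characterization of hereditary Lindel\"ofness through the absence of uncountable right-separated subspaces; the argument is the same.) So suppose towards a contradiction that some open $W\subseteq\omg$ is not Lindel\"of. Since every open set is a union of basic sets $U_F$, fix an open cover $\mathcal{V}\subseteq\{U_F:F\in[\omg]^{<\omega}\}$ of $W$ with no countable subcover. By recursion on $\alpha<\omg$ choose $x_\alpha\in W$ and $F_\alpha\in[\omg]^{<\omega}$ with $U_{F_\alpha}\in\mathcal{V}$, $x_\alpha\in U_{F_\alpha}$, and $x_\alpha\notin\bigcup\{U_{F_\beta}:\beta<\alpha\}$; this is possible at each stage because $\{U_{F_\beta}:\beta<\alpha\}$ is then a countable subfamily of $\mathcal{V}$ and so fails to cover $W$. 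Note that the $x_\alpha$ are pairwise distinct and that $x_\alpha\notin U_{F_\beta}$ whenever $\beta<\alpha$.

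The next step is to normalize the finite sets $F_\alpha$ by the $\Delta$-system lemma: there are an uncountable $I\subseteq\omg$ and a finite root $R$ with $F_\alpha\cap F_{\alpha'}=R$ for distinct $\alpha,\alpha'\in I$. Writing $F_\alpha=R\cup F'_\alpha$, the petals $\{F'_\alpha:\alpha\in I\}$ are pairwise disjoint. At most one of them is empty: if $F'_\alpha=F'_{\alpha'}=\emptyset$ with $\alpha<\alpha'$ in $I$ then $F_\alpha=F_{\alpha'}=R$, hence $x_{\alpha'}\in U_{F_{\alpha'}}=U_{F_\alpha}$, contradicting $x_{\alpha'}\notin U_{F_\alpha}$; so after discarding one index we may assume each $F'_\alpha$ is nonempty, and then $B:=\{F'_\alpha:\alpha\in I\}$ is an uncountable pairwise disjoint subfamily of $[\omg]^{<\omega}$. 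The crucial observation is that, since $x_\alpha\in U_{F_\alpha}=U_R\cap U_{F'_\alpha}$, we have $x_\alpha\in U_R$ for every $\alpha\in I$; consequently, for $\beta<\alpha$ in $I$, the relation $x_\alpha\notin U_{F_\beta}=U_R\cap U_{F'_\beta}$ strengthens to $x_\alpha\notin U_{F'_\beta}$. In other words, deleting the common root $R$ does not destroy the ``non-covering'' relation between the points and the sets.

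Finally I would apply the hypothesis to $B$: there is a countable $B'=\{F'_\alpha:\alpha\in J\}$ (with $J\subseteq I$ countable) for which $Z:=\omg\setminus\bigcup\{U_{F'_\alpha}:\alpha\in J\}$ has size at most $\omega$. Pick a countable ordinal $\delta$ above $\sup J$. For every $\gamma\in I$ with $\gamma\ge\delta$ I claim $x_\gamma\in Z$: if not, then $x_\gamma\in U_{F'_\alpha}$ for some $\alpha\in J$, but $\alpha<\delta\le\gamma$ gives $x_\gamma\notin U_{F'_\alpha}$ by the previous paragraph --- a contradiction. Thus the uncountably many pairwise distinct points $\{x_\gamma:\gamma\in I,\ \gamma\ge\delta\}$ all lie in the countable set $Z$, which is absurd. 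This contradiction establishes that $W$ is Lindel\"of, as required.

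I expect the one real point of the argument to be the ``crucial observation'' in the middle paragraph: the hypothesis is about genuinely \emph{pairwise disjoint} families, while the recursion naturally produces sets $F_\alpha$ sharing the root $R$, and the argument only works because every chosen point automatically lies in $U_R$, so that passing from $F_\alpha$ to the petal $F'_\alpha$ loses nothing. The surrounding pieces --- the recursive choice of the $x_\alpha$ (using that $W$ is not Lindel\"of), the $\Delta$-system reduction, the nonemptiness of the petals, and the reduction of hereditary Lindel\"ofness to open subspaces --- are all routine.
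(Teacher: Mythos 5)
Your proof is correct, but it follows a genuinely different route from the paper's. You argue by contradiction: a non-Lindel\"of open subspace yields a transfinite sequence of points $x_\alpha$ and basic sets $U_{F_\alpha}$ with $x_\alpha\notin U_{F_\beta}$ for $\beta<\alpha$; the classical $\Delta$-system lemma then produces the pairwise disjoint family needed to invoke the hypothesis, and the conclusion is that a countable set would have to contain uncountably many distinct $x_\gamma$. The paper instead gives a direct proof using a countable elementary submodel $M$: it shows that for any open family $\{U_F:F\in\mathcal{G}\}$ the countable subfamily $\{U_F:F\in\mathcal{G}\cap M\}$ already covers the union, extracting the $\Delta$-system \emph{inside} $M$ (via its Lemma \ref{delta}) and using that the countably many uncovered points, being elements of a countable set in $M$, lie in $M$. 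The two arguments share the same kernel: your ``crucial observation'' that the root $R$ can be deleted because every witness point already lies in $U_R$ is exactly the paper's step ``$\alpha\in U_{G_0}\subseteq U_F$ and $U_G=U_{G\setminus F}\cap U_F$''. What your version buys is self-containedness and elementarity --- no submodels, only the standard $\Delta$-system lemma --- at the cost of being a proof by contradiction that does not exhibit the countable subcover explicitly; the paper's version buys a constructive subcover and, more importantly, sets up the submodel machinery that is reused almost verbatim in Proposition \ref{prop} and Lemma \ref{mainlemma2}. One presentational nit: you should note up front that the $F_\alpha$ are pairwise distinct as sets (which your own observation ``$F_\alpha=F_{\alpha'}$ forces $x_{\alpha'}\in U_{F_\alpha}$'' already gives) before invoking the $\Delta$-system lemma, so that the degenerate case of a constant family never arises and at most one petal can be empty.
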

\begin{proof}
 Fix an open family $\mathcal{U}$; we may assume that $\mathcal{U}=\{U_F:F\in \mathcal{G}\}$ for some $\mathcal{G}\subseteq [\omg]^{<\omega}$. Let $M$ be a countably elementary submodel of $H(\vartheta)$ for some sufficiently large  $\vartheta$ such that $\{U_\gamma:\gamma\in \omega_1\},\ {\mathcal G}\in M$. 

It suffices to prove that $\bigcup \mathcal{U}$ is covered by the countable family $\mathcal{V}=\{U_F:F\in \mathcal{G}\cap M\}$. $\mathcal{V}$ clearly covers $\bigl(\bigcup \mathcal{U} \bigr)\cap M$ thus we consider an arbitrary $\alpha\in \bigl(\bigcup \mathcal{U} \bigr)\setminus M$. There is $G_0\in \mathcal{G}$ such that $\alpha\in U_{G_0}$; let $F=G_0\cap M$. If $F=G_0$ then $\mathcal{V}$ covers $\alpha$ thus we are done. Otherwise there is an uncountable $\Delta$-system $\mathcal{D}\subseteq \mathcal{G}$ in $M$ with kernel $F$ by Lemma \ref{delta}. Consider the uncountable, pairwise disjoint family $B=\{G\setminus F: G\in \mathcal{D}\}$; by our hypothesis there is a countable $B'\subseteq B$ such that
$$|\omg\setminus\bigcup\{U_H:H\in B'\}|\leq\omega.$$
$B'$ can be chosen in $M$ since $B\in M$; note that $B'\subseteq M$. Thus the countable set of points not covered also lie in $M$. Therefore, there is $G\setminus F\in B'$ such that $\alpha\in U_{G\setminus F}$. Hence $\alpha\in U_G$ since $\alpha\in U_{G_0} \subseteq U_{F}$ and $U_G=U_{G\setminus F}\cap U_F$. This completes the proof of the lemma.
\end{proof}

Let us define now the topology which will be used to ensure the Hausdorff property.

\begin{dfn}
 Define a topology on $\R$ as follows. Let $Q\subseteq \mathbb{R}$ be a Euclidean open set and let $Q^*=\{H\in\R: H\subseteq Q \}$. Sets of the form $Q^*$ define a $\rho$ topology on $\R$.
\end{dfn}

The proof of the following claim is straightforward.

 \begin{clm}
  \begin{enumerate}
 \item $(\R,\rho)$ is of countable weight,
\item any family $\mathcal{X}\subseteq \R$ of pairwise disjoint nonempty sets forms a Hausdorff subspace of $(\R,\rho)$.
\end{enumerate}
 \end{clm}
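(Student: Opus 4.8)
The plan is to treat the two parts separately, both resting on the identity $Q_1^*\cap Q_2^*=(Q_1\cap Q_2)^*$ (which also confirms that the sets $Q^*$ genuinely form a base for a topology: they are closed under finite intersection and their union is $\mathbb{R}^*=\R$). For part (1), I would fix a countable base $\mathcal{B}$ for the Euclidean topology on $\mathbb{R}$ — say the open intervals with rational endpoints — and let $\mathcal{B}^+$ be the countable family of all finite unions of members of $\mathcal{B}$. Then I would show that $\{W^*:W\in\mathcal{B}^+\}$ is a base for $\rho$: given a Euclidean open set $Q$ and some $H\in Q^*$, for each $x\in H$ pick $B_x\in\mathcal{B}$ with $x\in B_x\subseteq Q$, and observe that since $H$ is \emph{finite}, $W:=\bigcup_{x\in H}B_x$ belongs to $\mathcal{B}^+$ and satisfies $H\in W^*\subseteq Q^*$. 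Hence $\rho$ has a countable base, i.e.\ countable weight.

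For part (2), let $\mathcal{X}\subseteq\R$ be a family of pairwise disjoint nonempty sets and fix distinct $H_1,H_2\in\mathcal{X}$. These are disjoint finite — hence compact — subsets of the metric space $\mathbb{R}$, so one can choose disjoint Euclidean open sets $Q_1\supseteq H_1$ and $Q_2\supseteq H_2$. Then $Q_1^*\cap\mathcal{X}$ and $Q_2^*\cap\mathcal{X}$ are open neighbourhoods of $H_1$ and $H_2$ in the subspace $\mathcal{X}$, and they are disjoint, because $Q_1^*\cap Q_2^*=(Q_1\cap Q_2)^*=\emptyset^*=\{\emptyset\}$ while $\emptyset\notin\mathcal{X}$. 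This yields the Hausdorff property.

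I do not anticipate a real obstacle — the claim is indeed ``straightforward'' — but two small points each deserve a sentence. In part (1), the passage from an arbitrary open $Q$ to a finite union $W\in\mathcal{B}^+$ is precisely where finiteness of the members of $\R$ is used. In part (2), both hypotheses on $\mathcal{X}$ are used essentially: pairwise disjointness so that $H_1$ and $H_2$ can be separated inside $\mathbb{R}$, and nonemptiness because $\emptyset\in Q^*$ for \emph{every} $Q$, so in any subspace containing $\emptyset$ it could never be separated from another point.
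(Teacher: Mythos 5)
Your proof is correct; the paper simply declares the claim straightforward and omits any argument, and what you give is exactly the natural intended one (finiteness of members of $\R$ yielding a countable base of finite unions of rational intervals, and normality of $\mathbb{R}$ separating two disjoint finite sets, with $Q_1^*\cap Q_2^*=(Q_1\cap Q_2)^*=\{\emptyset\}$ and $\emptyset\notin\mathcal{X}$ finishing the Hausdorff part). Your side remarks about where finiteness, disjointness, and nonemptiness are each used are accurate and worth keeping.
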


Let us fix the countable base $\B=\{Q^*:Q$ is a disjoint union of finitely many intervals with rational endpoints$\}$ for $(\R,\rho)$.\\

 For the remainder of this section we suppose $\diamondsuit$; thus $2^\omega=\omg$ and we can fix an enumeration 
\begin{itemize}
 \item $ \{C_\alpha\}_{\alpha<\omg}=[\omg]^{\leq\omega} \text{ such that } C_\alpha\subseteq \alpha \text{ for all } \alpha<\omg.$
\end{itemize}

 Also, there is a $\diamondsuit$-sequence $\{B_\gamma\}_{\gamma<\omg}$  on $[\omg]^{<\omega}$; that is, 

\begin{itemize}
\item $\text{ for every uncountable } B\subseteq [\omg]^{<\omega}
\text{ there are stationary many } \beta\in \omg \text{ such that } B\cap [\beta]^{<\omega}=B_\beta. $
\end{itemize}

The next theorem is the key to our main result; we encourage the reader to first skip the quite technical proof of Theorem \ref{mainthm} and go to Corollary \ref{maincor} to see how our main result is deduced. In particular, \textbf{IH}(3) assures the space is not a $D$-space and \textbf{IH}(4) makes the space hereditarily Lindel\"of.                                                                                                                                                                                                                                                                 
                                                                                                                                                                                                                                                                      \begin{thm}\label{mainthm}
 There exist $\{U^\alpha_\gamma\}_{\gamma\leq\alpha}$ and $\F_\alpha: (\alpha+1) \rightarrow \R$ for $\alpha<\omg$ with the following properties:
\begin{enumerate}[\textbf{IH}(1)]
 \item $U^\alpha_\gamma\subseteq \alpha+1$ and $U^\alpha_\alpha=\alpha+1$ for every $\gamma\leq\alpha<\omg$, and  the family $\varphi_\alpha[\alpha+1]$ is pairwise disjoint for every $\alpha<\omg$.
\item $U^\alpha_\gamma=U^{\alpha_0}_\gamma\cap (\alpha+1)$ and $\F_\alpha=\F_{\alpha_0}\upharpoonright{(\alpha+1)}$ for all $\gamma\leq\alpha\leq\alpha_0$.
\end{enumerate}

 Let $\tau_\alpha$ denote the topology generated by the sets  $$\{U^\alpha_\gamma:\gamma\leq\alpha\}\cup\{\F_\alpha^{-1}(W):W\in \B\}$$ as a subbase. Let $U^\alpha_F=\bigcap\{\U:\gamma \in F\}$ for $F\in [\alpha+1]^{<\omega}$.

\begin{enumerate}[\textbf{IH}(1)]
\setcounter{enumi}{2}
 \item If $C_\alpha$ is $\tau_\alpha$ closed discrete then $\bigcup \{\U:\gamma\in C_\alpha\}\neq\alpha+1$.

\item Let $T_\alpha=\{\beta\leq\alpha:$ $B_\beta$ is a pairwise disjoint family of finite subsets of $\beta$ and there is a countable elementary submodel  $M \prec H(\vartheta)$ for some sufficiently large $\vartheta$ such that (i)-(v) holds from below $\}$.

\begin{enumerate}[(i)] 
 \item $ M\cap\omg=\beta$,
\item $\B,\{B_\gamma\}_{\gamma<\omg}\in M$,
\item there is a function $\F\in M$ such that $\F\upharpoonright\beta=\F_\beta\upharpoonright\beta$,
\item there is an uncountable $B\in M$ such that $M\cap B=B_\beta$, and
\item there is a $\{V_\gamma\}_{\gamma<\omg}\in M$ such that $V_\gamma\cap\beta=U^\beta_\gamma\cap \beta$ for all $\gamma<\beta$.
\end{enumerate}

Then
\begin{enumerate}[(a)]
 \item if $\beta\in T_\alpha$ then $B_\beta$ is a local $\pi$-network at $\beta$ in $\tau_\alpha$,
\item if $\beta \in T_\alpha\cap \alpha$ then for every $V\in \tau_\alpha$ with $\beta\in V$ the family $$\{U^\alpha_F:F\in B_\beta, F\subseteq V\}$$ is an $\omega$-cover of $(\beta,\alpha]$.
\end{enumerate}

\end{enumerate}

\end{thm}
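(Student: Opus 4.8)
The plan is to construct the sequences $\{U^\alpha_\gamma\}_{\gamma\le\alpha}$ and $\{\F_\alpha\}$ by transfinite recursion on $\alpha<\omg$, where \textbf{IH}(2) forces the construction to be coherent, so at limit stages $\alpha$ we are essentially forced to take unions of what was built below (the limit step is routine once we check that the limiting objects still satisfy \textbf{IH}(1), which amounts to checking that $\varphi_\alpha[\alpha]=\bigcup_{\beta<\alpha}\varphi_\beta[\beta]$ remains pairwise disjoint, true by coherence, and then choosing $\varphi_\alpha(\alpha)\in\R$ disjoint from everything already used — possible since at stage $\alpha$ only countably many values have been chosen and $\R$ is uncountable with, say, arbitrarily small pairwise disjoint sets available). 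So the real content is the successor step: given everything below $\alpha$, we must define $U^\alpha_\gamma$ for $\gamma\le\alpha$ (extending the old $U^{<\alpha}_\gamma$ by deciding membership of the single new point $\alpha$) and $\varphi_\alpha(\alpha)$, so that \textbf{IH}(3) and \textbf{IH}(4) hold at $\alpha$.

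For \textbf{IH}(3): if $C_\alpha$ is not $\tau_\alpha$-closed-discrete there is nothing to do, so assume it is. Then, since $\alpha\notin C_\alpha\subseteq\alpha$ and $C_\alpha$ is closed discrete in $\tau_\alpha$, the point $\alpha$ has a $\tau_\alpha$-neighbourhood meeting $C_\alpha$ in a finite (in fact we can shrink to empty, but finiteness suffices) set; more to the point, we want to guarantee $\alpha\notin\bigcup\{U^\alpha_\gamma:\gamma\in C_\alpha\}$, i.e. when we decide membership of $\alpha$ we simply put $\alpha\notin U^\alpha_\gamma$ for every $\gamma\in C_\alpha$. This is the one place we actively exploit the freedom in extending the sets; the only thing to verify is that this single negative decision is compatible with the demands coming from \textbf{IH}(4). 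The delicate point is that \textbf{IH}(4)(a),(b) are statements that must hold at stage $\alpha$ for \emph{every} $\beta\in T_\alpha$, including possibly $\beta<\alpha$ where the network/$\omega$-cover requirement constrains which points must lie in which $U^\alpha_\gamma$; so we must check that ``$\alpha\notin U^\alpha_\gamma$ for $\gamma\in C_\alpha$'' never conflicts with ``$\alpha$ must be covered by $U^\alpha_F$ for suitable $F\in B_\beta$''. This is exactly where the $\diamondsuit$-guessing and the elementary submodel $M$ from \textbf{IH}(4)(iv)--(v) do their work: because $C_\alpha$ is closed discrete and $\alpha\in T_\alpha$ would have to reflect down (if $\alpha\in T_\alpha$, the submodel $M$ with $M\cap\omg=\alpha$ sees $C_\alpha$... wait, $C_\alpha\subseteq\alpha=M\cap\omg$ but $C_\alpha$ need not be in $M$), we get that near $\alpha$ the set $C_\alpha$ cannot interfere — this is the main obstacle and the heart of the argument.

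So here is the structure I would write for the successor step. Case 1: $\alpha\notin T_{\alpha}$ — more precisely $\alpha\notin T_\alpha$, i.e. either $B_\alpha$ is not a pairwise disjoint family of finite subsets of $\alpha$, or no suitable $M$ exists. Then \textbf{IH}(4)(a) imposes no requirement about the point $\alpha$ itself, \textbf{IH}(4)(b) for $\beta<\alpha$ imposes requirements only about covering points of $(\beta,\alpha]$, and we handle \textbf{IH}(3) by the negative decision above while ensuring the finitely-or-countably-many covering demands from \textbf{IH}(4)(b) are met by \emph{positive} decisions $\alpha\in U^\alpha_\gamma$ for the appropriate $\gamma$'s — we must check these two do not clash, which follows because the $\gamma$'s forced positive by \textbf{IH}(4)(b) lie in the guessed family $B_\beta$ and hence, using that $C_\alpha$ would be closed discrete, can be separated from $C_\alpha$. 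Case 2: $\alpha\in T_\alpha$, witnessed by $M$. Now \textbf{IH}(4)(a) demands $B_\alpha$ be a local $\pi$-network at $\alpha$ in $\tau_\alpha$; using (iii)--(v) we pull back, via $M$, an uncountable family $B\in M$ with $M\cap B=B_\alpha$ and candidate sets $\{V_\gamma\},\varphi\in M$, and then — this is the crux — we must \emph{define} $\varphi_\alpha(\alpha)$ and the memberships $\alpha\in U^\alpha_\gamma$ so that every basic $\tau_\alpha$-neighbourhood of $\alpha$ contains some $U^\alpha_F$ with $F\in B_\alpha$; since basic neighbourhoods of $\alpha$ are finite intersections of sets $U^\alpha_\gamma\ni\alpha$ and $\varphi_\alpha^{-1}(W)$, and since the sets in $B_\alpha=M\cap B$ are cofinal in $\alpha$ and reflected through $M$, a density/elementarity argument inside $M$ produces the needed $F$; finally re-check \textbf{IH}(3) is still satisfiable, i.e. that none of the $F\in B_\alpha$ we are forced to have $\alpha\in U^\alpha_F$ for (there are none directly — the $\pi$-network condition is about $U^\alpha_F\subseteq V$, not about $\alpha\in U^\alpha_F$) conflicts with $\alpha\notin U^\alpha_\gamma$ for $\gamma\in C_\alpha$. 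I expect the single hardest step to be verifying \textbf{IH}(4)(b) at the successor: showing that for $\beta\in T_\alpha\cap\alpha$ and a given $\tau_\alpha$-open $V\ni\beta$, the family $\{U^\alpha_F:F\in B_\beta,\ F\subseteq V\}$ is an $\omega$-cover of $(\beta,\alpha]$ — in particular that the new point $\alpha$ is in some such $U^\alpha_F$ — which forces the positive membership decisions for $\alpha$ and must be reconciled with the \textbf{IH}(3) negative decisions; the reconciliation works because $\beta\in T_\alpha$ gives an elementary submodel with $M\cap\omg=\beta$, inside which $B_\beta$ is ``generic'', so the relevant $F$'s avoid any fixed closed discrete $C_\alpha$ in a way we can arrange.
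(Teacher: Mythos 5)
Your overall recursion scheme matches the paper's: by the coherence condition \textbf{IH}(2), stage $\alpha$ only requires deciding whether $\alpha\in U^\alpha_\gamma$ for each $\gamma<\alpha$ and choosing $\F_\alpha(\alpha)$, and \textbf{IH}(3) is met by refusing $\alpha\in U^\alpha_\gamma$ for $\gamma\in C_\alpha$. (Your separate worry about limit stages is a non-issue for the same reason: every stage is a one-point extension.) The reconciliation of \textbf{IH}(3) with \textbf{IH}(4)(b) is also essentially as you guess, but it needs no ``genericity'' of $B_\beta$: for each $\beta_n\in T_\alpha\cap\alpha$ one shrinks the neighbourhood $V$ of $\beta_n$ so that $V\cap C_\alpha\subseteq\{\beta_n\}$ (possible since $C_\alpha$ is closed discrete), and the inductive $\omega$-cover hypothesis then supplies $F_n\in B_{\beta_n}$ with $F_n\subseteq V\cap\beta_n$, hence $F_n\cap C_\alpha=\emptyset$; the sets gaining the point $\alpha$ are exactly the $U^\alpha_\gamma$ with $\gamma\in\bigcup_n F_n$.

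There is, however, a genuine gap: you have misidentified the main difficulty, and your sketch has no mechanism for it. The hard case is $\alpha\in T_\alpha$ \emph{and} $T_\alpha\cap\alpha\neq\emptyset$ simultaneously. There the positive decisions $\alpha\in U^\alpha_\gamma$ for $\gamma\in\bigcup_n F_n$, forced by \textbf{IH}(4)(b), shrink the neighbourhood base at $\alpha$ to sets of the form $\bigcap_{k\le n}U^\alpha_{F_k}\cap\F_\alpha^{-1}(W)$; so \textbf{IH}(4)(a) at $\alpha$ now demands, for every $n$, some $F\in B_\alpha$ with $F\subseteq\bigcap_{k\le n}U^\alpha_{F_k}$ \emph{and} $\F_\alpha[F]\subseteq W$ --- and this must be arranged while $\F_\alpha(\alpha)$ is still to be chosen, disjoint from all earlier values $x_\beta$ (needed for the Hausdorff refinement). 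The paper resolves this by a fusion: the $F_n$ are constructed together with a nested sequence of rational boxes $W_n\in\B$ (diameters tending to $0$, closures nested, avoiding $\F(\alpha_n)$) while maintaining that, in the sense of the elementary submodel $M$, uncountably many $F$ in the guessed family $B$ satisfy $F\subseteq\bigcap_{k\le n}V_{F_k}$ and $\cup\F[F]\in W_n$; then $x_\alpha$ is the unique point of the intersection of the boxes. Your phrase ``a density/elementarity argument inside $M$ produces the needed $F$'' does not supply this simultaneous bookkeeping, and without it there is no reason the countably many constraints (the $\pi$-network demand at $\alpha$, the $\omega$-cover demands at each $\beta_n$, and disjointness of $x_\alpha$ from $\{x_\beta:\beta<\alpha\}$) are jointly satisfiable. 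Two smaller slips: the local $\pi$-network condition requires $F\subseteq V$ for $F\in B_\beta$, not $U^\alpha_F\subseteq V$; and even in the case $T_\alpha\cap\alpha=\emptyset$ with $\alpha\in T_\alpha$, choosing $x_\alpha$ is not free --- it must be an accumulation point of $\{\cup\F[F]:F\in B\}$ in the sense of $M$ while avoiding countably many earlier $x_\beta$, which requires the counting argument of the paper's Claim 3.8.
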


\begin{proof}
 We prove by induction on $\alpha<\omg$ with inductional hypothesises \textbf{IH}(1)-\textbf{IH}(4)! Suppose we constructed $\{U^\beta_\gamma\}_{\gamma\leq\beta}$ for $\beta<\alpha$. Let $$\V=\cup\{U^\beta_\gamma:\gamma\leq\beta<\alpha\} \text{ and } \F_{<\alpha}=\cup\{\F_\beta:\beta<\alpha\}.$$
 Let $\tau^-_\alpha$ denote the topology on $\alpha$ generated by the sets $$\{\V:\gamma<\omg\}\cup\{\F^{-1}_{<\alpha}(W):W\in \B \}$$ as a subbase. Let $U^{<\alpha}_F=\bigcap\{\V:\gamma \in F\}$ for $F\in [\alpha]^{<\omega}$. Note that if $\beta\in T_\alpha\cap \alpha$ then $\beta\in T_{\alpha'}$ for every $\alpha'\in(\beta,\alpha)$; hence \textbf{N}(i) and \textbf{N}(ii) below holds by \textbf{IH}(4):
\begin{enumerate}[\textbf{N}(i)]
 \item for every $V\in\tau^-_\alpha$ with $\beta\in V$ the family $$\{U^{<\alpha}_F: F\in B_\beta, F\subseteq V\}$$ is an $\omega$-cover of $(\beta,\alpha)$;
\item $B_\beta$ is a local $\pi$-network at $\beta$ in $\tau^-_\alpha$.
\end{enumerate}

Therefore, it suffices to define $U^\alpha_\alpha=\alpha+1$ and 
\[ \U = \left\{ \begin{array}{ll}
\V \text{ or } \\
\V \cup \{\alpha\} \end{array} \right. \]

and $\F_\alpha\upharpoonright{\alpha}=\F_{<\alpha}$, $\F_\alpha(\alpha)=x_\alpha\in \R$ such that
\begin{enumerate}[\textbf{D}(i)]
 \item $x_\alpha$ is disjoint from $x_\beta=\F_{<\alpha}(\beta)$ for all $\beta<\alpha$,
\item if $\beta \in T_\alpha\cap\alpha$ then for every $\beta\in V\in \tau_\alpha$ the family $$\{U^\alpha_F:F\in B_\beta, F\subseteq V\}$$ is an $\omega$-cover of $(\beta,\alpha]$,
\item if $C_\alpha$ is $\tau_\alpha^-$ closed discrete then $\alpha\notin \U$ for all $\gamma\in C_\alpha$,
\item if $\alpha\in T_\alpha$ then $B_\alpha$ is a local $\pi$-network at $\alpha$ in $\tau_\alpha$.
\end{enumerate}

\vspace{0.6 cm}
\begin{large}
\textbf{Case I.} $T_\alpha\cap \alpha = \emptyset$
\end{large}

\vspace{0.6 cm}

Let $U^\alpha_\alpha=\alpha+1$, $\U=\V$ for $\gamma <\alpha$. We proceed differently according to whether $\alpha\notin T_\alpha$ or $\alpha\in T_\alpha$.

\vspace{0.6 cm}
\begin{large}
 \textbf{Subcase A.} $\alpha\notin T_\alpha$
\end{large}
\vspace{0.6 cm}

Pick any $x_\alpha\in \R$ disjoint from $x_\beta$ for all $\beta<\alpha$. Clearly, \textbf{D}(i)-\textbf{D}(iv) are satisfied.

\vspace{0.6 cm}
\begin{large}
\textbf{Subcase B.} $\alpha\in T_\alpha$
\end{large}
\vspace{0.6 cm}

It is clear that \textbf{D}(ii) and \textbf{D}(iii) are satisfied. Let $M$ be a countable elementary submodel of $H(\vartheta)$ for some sufficiently large $\vartheta$ showing that $\alpha\in T_\alpha$. To find the appropriate $x_\alpha\in \R$ we need that $B_\alpha$ is a local $\pi$-network at $\alpha$ in $\tau_\alpha$. Since $\{\F^{-1}_\alpha(W):W\in \B,x_\alpha\in W\}$ will be a base at $\alpha$ in $\tau_\alpha$ we need that for all $W\in \B$ such that $x_\alpha\in W$ there is an $F\in B_\alpha$ such that $\F[F]\subseteq W$. Since $\F[F]\subseteq W$ iff $\cup\F[F]\in W$, we need to find an accumulation point of the finite sets $\{\cup\F[F]:F\in B_\alpha\}$. We prove the following which will suffice:

\begin{clm}\label{clmB} 
 There is an $x_\alpha\in \R$ such that $x_\alpha\cap x_\beta=\emptyset$ for all $\beta<\alpha$ and for all $W \in \B$ such that $x_\alpha\in W$ we have
$$M \models |\{F\in B: \cup\F[F]\in W\}|>\omega.$$
\end{clm}

Indeed, \textbf{D}(i) is satisfied. Let us check \textbf{D}(iv);  clearly, $$M\models \exists F\in B: \cup\F[F]\in W$$ for every $W\in \B$ such that $x_\alpha\in W$. Hence there is $F\in B\cap M=B_\alpha$ such that $\cup\F[F]=\cup\F_\alpha[F]\in W$, that is $\F_\alpha[F]\subseteq W$. Thus $B_\alpha$ is a local $\pi$-network at $\alpha$ in $\tau_\alpha$; that is, \textbf{D}(iv) is satisfied.\\

\begin{proof}[Proof of Claim \ref{clmB}]
Since $M\models |B|>\omega$ there is $\widetilde{B}\in[B]^\omg \cap M$ and $k\in\omega, \{n_i:i<k\}\subseteq \omega$ such that $|F|=k$ for all $F\in \widetilde{B}$ and if $F=\{\gamma_i:i<k\}$ then $|\F(\gamma_i)|=n_i$ for all $i<k$.

Let $s=\sum_{i<k} n_i$. Now consider the pairwise disjoint $s$-element subsets $\{\cup\F[F]:F\in \widetilde{B}\}$ in $\mathbb{R}$. Clearly 
\begin{multline*}
M\models \bigl(\text{there are uncountably many pairwise disjoint } x\in [\mathbb{R}]^s \text{ such that }\\ |\{F\in \widetilde{B}:\cup\F[F]\in W\}|>\omega \text{ for every } W\in \B \text{ with } x\in W \bigr). 
\end{multline*}
 Hence, there is $x_\alpha\in [\mathbb{R}]^s$ disjoint from $x_\beta$ for all $\beta<\alpha$ such that $|\{F\in \widetilde{B}:\cup\F[F]\in W\}|>\omega$ for all $W\in \B$ with $x_\alpha\in W$. Thus $$M\models |\{F\in \widetilde{B}:\cup\F[F]\in W\}|>\omega$$
which we wanted to prove.
\end{proof} 

\vspace{0.6 cm}
\begin{large}
\textbf{Case II.} $T_\alpha\cap \alpha \neq \emptyset$
\end{large}
\vspace{0.6 cm}

Let $T_\alpha\cap\alpha=\{\beta_n:n\in\omega\}$ and $\{G_n:n\in\omega\}\subseteq [\alpha]^{<\omega}$ such that for all $\beta\in T_\alpha\cap \alpha$ and $G\subseteq (\beta,\alpha)$ there are infinitely many $n\in\omega$ such that $\beta=\beta_n$ and $G=G_n$. Let $\{V_k(\beta):k<\omega\}$ denote a decreasing neighbourhood base for the point $\beta\in T_\alpha\cap\alpha$ in $\tau^-_\alpha$. Note that $\{V_n(\beta_n):n\in\omega, \beta_n=\beta\}$ is a base for $\beta\in T_\alpha\cap\alpha$.

\vspace{0.6 cm}
\begin{large}
 \textbf{Subcase A.} $\alpha\notin T_\alpha$
\end{large}
\vspace{0.6 cm}

We need the following claim:

\begin{clm} There is $F_n\in B_{\beta_n}$ for $n\in\omega$ such that 
\begin{enumerate}[\textbf{A}(i)]
 \item $F_n\subseteq V_n(\beta_n)$,
\item $G_n\subseteq U^{<\alpha}_{F_n}$,
\item $F_n\cap C_\alpha =\emptyset$ if $C_\alpha$ is $\tau^-_\alpha$ closed discrete,
\end{enumerate}
\end{clm}
\begin{proof}
There is $V\in \tau_\alpha^-$ such that $\beta_n\in V\subseteq V_n(\beta_n)$ and if $C_\alpha$ is closed discrete, then $C_\alpha\cap V\subseteq \{\beta_n\}$. The family $$\{U^{<\alpha}_F: F\in B_{\beta_n}, F\subseteq V\}$$ is an $\omega$-cover of $(\beta_n,\alpha)$ by \textbf{N}(ii), thus there is $F_n\in B_{\beta_n}$ such that $F_n\subseteq V$ and $G_n\subseteq U^{<\alpha}_{F_n}$.
\end{proof}

Let $U^\alpha_\alpha=\alpha+1$ and for $\gamma<\alpha$ let
\[ \U = \left\{ \begin{array}{ll}
\V & \mbox{if $\gamma\notin \cup\{F_n:n\in\omega\}$, } \vspace{0.2 cm} \\ 
\V \cup \{\alpha\}  & \mbox{if $\gamma\in \cup\{F_n:n\in\omega\}$. } \end{array} \right. \]

Pick any $x_\alpha\in \R$ disjoint from $x_\beta$ for all $\beta<\alpha$.

\textbf{D}(i), \textbf{D}(iii), and \textbf{D}(iv) are trivially satisfied. Let us check \textbf{D}(ii); fix $\beta\in T_\alpha\cap \alpha$, any neighbourhood $V\in \tau_\alpha$ such that $\beta\in V$, and a finite subset $G \subseteq (\beta,\alpha)$. We show that there is an $F\in B_\beta$, such that $U^\alpha_F$ covers $G\cup\{\alpha\}$ and $F\subseteq V$. There is $n\in\omega$ such that $\beta_n=\beta$, $G_n=G$, and $V_n(\beta_n)\subseteq V$; then $F_n\in B_{\beta_n}$ and $F=F_n$ does the job by  \textbf{A}(i), \textbf{A}(ii) and the fact that $\alpha\in U^\alpha_{F_n}$.

\vspace{0.6 cm}

\begin{large}
\textbf{Subcase B.} $\alpha\in T_\alpha$ 
\end{large}

\vspace{0.6 cm}

Let $M$ be a countable elementary submodel of $H(\vartheta)$ for some sufficiently large $\vartheta$ showing that $\alpha\in T_\alpha$. Since $M\models |B|>\omega$ there is $\widetilde{B}\in[B]^\omg \cap M$ and $k\in\omega, \{n_i:i<k\}\subseteq \omega$ such that $|F|=k$ for all $F\in \widetilde{B}$ and if $F=\{\gamma_i:i<k\}$ then $|\F(\gamma_i)|=n_i$ for all $i<k$. Let $s=\sum_{i<k}n_i$. Enumerate $\alpha$ as $\{\alpha_n:n\in\omega\}$.

\begin{clm}There are $F_n\in B_{\beta_n}$ and $W_n\in \B$ for $n\in\omega$ such that 
\begin{enumerate}[\textbf{B}(i)]
 \item $F_n\subseteq V_n(\beta_n)$,
\item $G_n\subseteq U^{<\alpha}_{F_n}$,
\item $F_n\cap C_\alpha =\emptyset$ if $C_\alpha$ is $\tau^-_\alpha$ closed discrete,
\item $W_n=\bigl(\cup\{Q_{n,i}:i<s\}\bigr)^*$ is a basic open set of the topology $\rho$ corresponding to $s$ many disjoint rational intervals $\{Q_{n,i}:i<s\}$ of diameter less than $\frac{1}{n}$,
\item $\overline{Q_{n+1,i}}\subseteq Q_{n,i}$ for every $i<s$ in the Euclidean topology,
\item $\F(\alpha_n)$ is disjoint from $\cup\{Q_{n,i}:i<s\}$, 
\item  and finally $$M\models |\bigl\{F\in \widetilde{B}: F\subseteq \cap\{V_{F_k}:k\leq n\} \text { and } \cup\F[F] \in W_n\bigr\}|>\omega.$$
\end{enumerate}
\end{clm}
\begin{proof}
We construct $F_n$ and $W_n$ by induction on $n\in\omega$. Suppose we constructed $F_k$ and $W_k$ for $k<n$ such that the hypothesises \textbf{B}(i)-\textbf{B}(vii) above are satisfied.

 Let $D=\bigl\{F\in \widetilde{B}:F\subseteq \cap\{V_{F_k}:k< n\} \text { and } \cup\F[F] \in W_{n-1}\bigr\}$ if $n>0$ and $D=\widetilde{B}$ if $n=0$; then $M\models |D|>\omega$. Just as in Claim \ref{clmB}
\begin{multline*}
M\models \bigl(\text{there are uncountably many pairwise disjoint } x\in \mathbb{R}^s\cap W_{n-1} \text{ such that } \\ |\{F\in D:\cup\F[F]\in W\}|>\omega \text{ for every } W\in \B \text{ with }x\in W\bigr).
\end{multline*}
 Choose $x\in \mathbb{R}^s\cap W_{n-1}$ such that $|\{F\in D:\cup\F[F]\in W\}|>\omega \text{ for every } W\in \B$ with $x\in W$ and $x\cap \F(\alpha_n)=\emptyset$. Let $x=\{x_i:i<s\}$ and choose $W_n=\bigl(\cup\{Q_{n,i}:i<s\}\bigr)^*$ such that 
\begin{itemize}
\item $Q_{n,i}$ is a rational interval of diameter less then $\frac{1}{n}$ for every $i<s$,
 \item $x_i\in Q_{n,i}$ for every $i<s$,
\item $\overline{Q_{n,i}}\subseteq Q_{n-1,i}$ for every $i<s$ in the Euclidean topology (if $n>0$),
\item $\cup \{Q_{n,i}:i<s\} \cap \F(\alpha_n)=\emptyset$.
\end{itemize}

Let $D'=\bigl\{F\in \widetilde{B}:F\subseteq \cap\{V_{F_k}:k< n\},\cup\F[F] \in W_{n}  \text { and } \beta_n< \min F\bigr\}$; clearly, $M\models |D'|>\omega$. Let $V\in \tau_\alpha^-$ such that $\beta_n\in V\subseteq V_n(\beta_n)$ and $V\cap C_\alpha \subseteq \{\beta_n\}$ if $C_\alpha$ is $\tau_\alpha^-$ closed discrete. Applying \textbf{N}(i) to $F\cup G_n$ for $F\in D'\cap M$ gives us that there is $F_n(F)\in B_{\beta_n}$ such that $F_n(F)\subseteq V$ and $$U^{<\alpha}_{F_n(F)}\text{ covers } F\cup G_n$$ and hence  $V_{F_n(F)}$ covers $F\cup G_n$. Thus 
\begin{multline*}
M\models \bigl(\text{ for every } F\in D' \text{ there is } F_n(F)\in B_{\beta_n} \text{ such that }\\ F_n(F)\subseteq V \text{ and } V_{F_n(F)} \text{ covers } F\cup G_n.\bigr)
\end{multline*}
 Finally, note that $M\models |B_{\beta_n}|\leq \omega$; thus 
\begin{multline*}
 M\models \bigl( \text{there is } F_n\in B_{\beta_n} \text{ such that } F_n\subseteq V \text{ and } V_{F_n} \text{ covers } F\cup G_n \\ \text{ for uncountably many } F\in D'\bigr).\end{multline*}
 It is now easily checked that $F_n$ and $W_n$ satisfies properties \textbf{B}(i)-\textbf{B}(vii).
\end{proof}

Let $U^\alpha_\alpha=\alpha+1$ and for $\gamma<\alpha$ let
\[ \U = \left\{ \begin{array}{ll}
\V & \mbox{if $\gamma\notin \cup\{F_n:n\in\omega\}$,} \vspace{0.2 cm} \\
\V \cup \{\alpha\}  & \mbox{if $\gamma\in \cup\{F_n:n\in\omega\}$. } \end{array} \right. \]

Let $x_\alpha\in \R$ be the unique $s$-element subset of $\mathbb{R}$ in the intersection $\cap\{\cup\{Q_{n,i}:i<s\}:n\in \omega\}$; existence and uniqueness follows from \textbf{B}(iv) and \textbf{B}(v), and $x_\alpha$ is disjoint from $x_\beta$ for all $\beta<\alpha$ by \textbf{B}(vi). Note that $$\bigl\{\cap\{U^\alpha_{F_k}:k\leq n\}\cap \F^{-1}_\alpha(W_n):n\in\omega\bigr\}$$ is a base for the point $\alpha$ in $\tau_\alpha$.

\textbf{D}(i) is satisfied by \textbf{B}(vi) and the fact that $x_\alpha \subseteq \cup\{Q_{n,i}:i<s\}$. 

Let us check \textbf{D}(ii); fix $\beta\in T_\alpha\cap \alpha$, any neighbourhood $V\in \tau_\alpha$ such that $\beta\in V$, and a finite subset $G \subseteq (\beta,\alpha)$. We show that there is an $F\in B_\beta$, such that $U^\alpha_F$ covers $G\cup\{\alpha\}$ and $F\subseteq V$. There is $n\in\omega$ such that $\beta_n=\beta$, $G_n=G$, and $V_n(\beta_n)\subseteq V$; $F_n\in B_{\beta_n}$ does the job by  \textbf{B}(i), \textbf{B}(ii) and the fact that $\alpha\in U^\alpha_{F_n}$.

\textbf{D}(iii) is satisfied by \textbf{B}(iii) and the definition of $U^\alpha_\gamma$.

Finally, let us check \textbf{D}(iv); it suffices to show that for every $n\in \omega$ there is $F\in B_\alpha$ such $$F\subseteq \cap\{U^\alpha_{F_k}:k\leq n\}\cap \F^{-1}_\alpha(W_n).$$ Condition \textbf{B}(vii) gives us this, using the observation that $\F_\alpha[F]\subseteq W$ iff $\cup \F_\alpha[F]\in W$ for any $F\in B_\alpha$ and $W\in \B$.

\vspace{0.6 cm}

By all means, this completes the proof of the theorem.\\
\end{proof}

Now we are ready to deduce our main result.

\begin{cor}\label{maincor} Suppose that $\{U^\alpha_\gamma\}_{\gamma\leq\alpha}$ and $\F_\alpha: (\alpha+1) \rightarrow \R$ for $\alpha<\omg$ are as in Theorem \ref{mainthm} and let $U_\gamma=\cup\{\U:\gamma\leq\alpha<\omg\}$ for $\gamma<\omg$ and $\F=\cup\{\F_\alpha: \alpha<\omg\}$. Let $\tau$ denote the topology on $\omg$ generated by the sets $$\{U_\gamma:\gamma<\omg\}\cup\{\F^{-1}(W):W\in \B\}$$ as a subbase.

The space $(\omg,\tau)$ is hereditarily Lindel\"of, Hausdorff but not a $D$-space. Also, $(\omg,\tau)$ has countable $\Psi$-weight.
\end{cor}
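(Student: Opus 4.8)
The plan is to verify each asserted property of $(\omg,\tau)$ by transferring the corresponding $\textbf{IH}$-condition from the approximations $\tau_\alpha$ to the full topology $\tau$, using $\textbf{IH}(2)$ to see that each $\tau_\alpha$ is exactly the subspace topology that $\tau$ induces on $\alpha+1$. First I would record this compatibility: since $U_\gamma\cap(\alpha+1)=\U$ and $\F\upharpoonright(\alpha+1)=\F_\alpha$ for every $\gamma\le\alpha$, the trace of a subbasic $\tau$-set on $\alpha+1$ is a subbasic $\tau_\alpha$-set, so $\tau_\alpha=\tau\upharpoonright(\alpha+1)$ and in particular $\alpha+1$ is $\tau$-open. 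The countable $\Psi$-weight (i.e.\ the $\F^{-1}(W)$, $W\in\B$, form a local $\pi$-network at every point) is then immediate: at a point $\alpha$, the set $\{\F^{-1}(W):W\in\B,\ x_\alpha\in W\}$ is a neighbourhood base, and shrinking each such $W$ to one with $W\subseteq Q$ for a given Euclidean open $Q\supseteq x_\alpha$ gives $\F^{-1}(W)\subseteq\F^{-1}(Q)$, which suffices since $\B$ is a base for $\rho$.

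For the Hausdorff property I would use $\textbf{IH}(1)$: the values $\F_\alpha[\alpha+1]$, hence $\F[\omg]$, form a pairwise disjoint family of nonempty finite subsets of $\mathbb R$, so by the Claim following the definition of $\rho$ this family is a Hausdorff subspace of $(\R,\rho)$; pulling back through the (injective on the relevant level) map $\F$ separates any two distinct points of $\omg$ by sets of the form $\F^{-1}(W)$, $W\in\B$. For "not a $D$-space" I would argue by contradiction: the neighbourhood assignment $\gamma\mapsto U_\gamma$ would admit a closed discrete $D\subseteq\omg$ with $\{U_\gamma:\gamma\in D\}$ covering $\omg$; by the $\diamondsuit$-enumeration there is $\alpha<\omg$ with $C_\alpha=D\cap\alpha$, and after possibly arranging (by a club/stationarity argument, since $D$ closed discrete and the enumeration of $[\omg]^{\le\omega}$ is onto) that $\alpha$ is a point above which $D$ stays, $C_\alpha$ is $\tau_\alpha$-closed discrete; then $\textbf{IH}(3)$ says $\bigcup\{\U:\gamma\in C_\alpha\}\ne\alpha+1$, so some $\delta\le\alpha$ is not covered by $\{U_\gamma:\gamma\in D\cap(\alpha+1)\}$, and one must check $\delta$ is not covered by any $U_\gamma$ with $\gamma\in D\setminus(\alpha+1)$ either — this is where care is needed, see below.

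The hereditary Lindel\"ofness is obtained by applying Lemma \ref{mainlemma} to the subbase $\{U_\gamma:\gamma<\omg\}\cup\{\F^{-1}(W):W\in\B\}$; since the $\F^{-1}(W)$ come from a countable family, it is enough to verify the disjoint-refinement hypothesis for uncountable pairwise disjoint $B\subseteq[\omg]^{<\omega}$, where now $U_F$ denotes the intersection over the subbase including the $\F^{-1}(W)$'s — but a standard reduction shows one only needs it for the $U_\gamma$-part. Given such $B$, by $\diamondsuit$ there are stationarily many $\beta$ with $B\cap[\beta]^{<\omega}=B_\beta$; choosing a countable $M\prec H(\vartheta)$ with $B,\{B_\gamma\},\B,\{U_\gamma\},\F\in M$ and $\beta=M\cap\omg$ one of these stationary points, the witnessing data $\F$, $B$, $\{V_\gamma\}=\{U_\gamma\}$ all lie in $M$, so $\beta\in T_\alpha$ for every $\alpha\ge\beta$, hence $\beta\in T_\omg$ in the limiting sense; then $\textbf{IH}(4)(a)$ gives that $B_\beta$ is a local $\pi$-network at $\beta$ in $\tau$ and $\textbf{IH}(4)(b)$ gives, for $\beta\in V$ with $V$ a small neighbourhood of $\beta$, that $\{U_F:F\in B_\beta,\ F\subseteq V\}$ $\omega$-covers $(\beta,\omg)$; since $B_\beta\subseteq B$ and $B_\beta$ $\pi$-networks at $\beta$ we can pick $F^*\in B_\beta$ inside any prescribed neighbourhood, and countably many such $F^*\in B\cap M$ cover all of $\omg$ except possibly a subset of $M$ (which is countable) — giving the required countable $B'\subseteq B$ with $|\omg\setminus\bigcup\{U_F:F\in B'\}|\le\omega$.

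I expect the main obstacle to be the "not a $D$-space" step, specifically the bookkeeping needed to guarantee that the uncovered point $\delta\le\alpha$ furnished by $\textbf{IH}(3)$ really is uncovered by the \emph{global} assignment $\{U_\gamma:\gamma\in D\}$ and not merely by its restriction to $D\cap(\alpha+1)$. One must exploit that for $\gamma>\alpha$ membership of $\delta\in U_\gamma$ is governed by whether $\delta$ was put into $\U[{\gamma}]$ at stage $\gamma$, i.e.\ by the $F_n$-sets there, which by $\textbf{D}(iii)$ avoid $C_\gamma$ whenever $C_\gamma$ is the relevant closed discrete set; combined with the stationarity of the set of good $\alpha$'s (so that $D\cap\alpha=C_\alpha$ \emph{and} $D$ is "reflected" correctly) this should close the gap, but it is the only place where the global-vs-local passage is not purely formal. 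The other steps are routine transfers of the inductive hypotheses through $\textbf{IH}(2)$.
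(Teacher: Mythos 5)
Your proposal is essentially right for the Hausdorff property, the countable $\Psi$-weight, and the hereditary Lindel\"of property (the last via Proposition \ref{prop} and Lemma \ref{mainlemma}, just as in the paper, modulo the fact that the paper routes the second countable part through Claim \ref{lindref} rather than your unstated ``standard reduction''). But the step you flag as the main obstacle --- showing the space is not a $D$-space --- contains a genuine gap, and the repair you sketch does not work. You treat $\{C_\alpha\}_{\alpha<\omg}$ as a $\diamondsuit$-style guessing sequence and take $\alpha$ with $C_\alpha=D\cap\alpha$, which leaves you having to rule out coverage of the uncovered point $\delta\le\alpha$ by $U_\gamma$ for $\gamma\in D\setminus(\alpha+1)$. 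This cannot be ruled out by the mechanism you propose: by \textbf{IH}(1) we have $U^\gamma_\gamma=\gamma+1$, so $U_\gamma\supseteq\gamma+1$ for \emph{every} $\gamma$, and hence a single element of $D$ above $\alpha$ automatically covers every $\delta\le\alpha$. Your appeal to \textbf{D}(iii) is also misdirected: that clause controls whether the \emph{new} point $\alpha$ is placed into $U^\alpha_\gamma$ for $\gamma\in C_\alpha$, not whether old points $\delta$ land in $U_\gamma$ for large $\gamma$.

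The actual argument is much simpler and requires no stationarity. The sequence $\{C_\alpha\}_{\alpha<\omg}$ is a plain enumeration of \emph{all} of $[\omg]^{\le\omega}$ with $C_\alpha\subseteq\alpha$ (this is where $2^\omega=\omg$ is used), not a guessing sequence. Since $(\omg,\tau)$ is Lindel\"of, any closed discrete $D$ is countable, hence equals $C_\alpha$ for some $\alpha$ with $D=C_\alpha\subseteq\alpha$ --- the \emph{entire} set $D$, with no elements above $\alpha$. Then $D$ is $\tau_\alpha$-closed discrete because $\tau_\alpha$ is the subspace topology on $\alpha+1$ (your compatibility observation from \textbf{IH}(2)), so \textbf{IH}(3) yields $\delta\le\alpha$ with $\delta\notin\bigcup\{U^\alpha_\gamma:\gamma\in D\}$, and since $U_\gamma\cap(\alpha+1)=U^\alpha_\gamma$ for all $\gamma\le\alpha$, the point $\delta$ is not covered by $\{U_\gamma:\gamma\in D\}$ at all. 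There is no global-versus-local passage to worry about once the enumeration is read correctly.
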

\begin{proof}
 First, we show that $(\omg,\tau)$ is hereditarily Lindel\"of and Hausdorff. We need the following observation.
\begin{clm}\label{lindref}
 A Hausdorff topology of countable weight $\tau_{\text{sc}}$ refined by a hereditarily Lindel\"of topology $\tau_{\text{hl}}$  on some set $X$ is again a hereditarily Lindel\"of, Hausdorff topology on $X$.
\end{clm}
\begin{proof}
Let $\tau_{\text{ref}}$ denote the topology generated by $\tau_{\text{sc}}\cup\tau_{\text{hl}}$ as a subbase; that is, $\tau_{\text{ref}}$ is the common refinement of  $\tau_{\text{sc}}$ and $\tau_{\text{hl}}$. $\tau_{\text{ref}}$ is clearly Hausdorff, we prove that for any open family $\mathcal{U}\subseteq \tau_{\text{ref}}$ there is a countable $\mathcal{U}_0\subseteq \mathcal{U}$ such that $\cup\mathcal{U}_0=\cup\mathcal{U}$. We can suppose that $$\mathcal{U}=\{U_i\cap V^i_j:i\in\omega, j\in I_i\}$$ where $\{U_i:i\in\omega\}\subseteq \tau_{\text{sc}}$ and $\{V^i_j:i\in\omega,j\in I_i\}\subseteq \tau_{\text{hl}}$ for some index sets $\{I_i:i\in\omega\}$. For every $i\in\omega$ there is a countable $J_i\subseteq I_i$ such that $$U_i\cap \bigcup\{V^i_j:j\in I_i\}=U_i\cap \bigcup\{V^i_j:j\in J_i\}$$ by the hereditarily Linel\"ofness of $\tau_{\text{hl}}$. Thus $$\cup\mathcal{U}=\bigcup\{U_i\cap V^i_j:i\in\omega, j\in J_i\}$$ which completes the proof.
\end{proof}

Therefore, it suffices to prove that the topology generated by $\{U_\gamma:\gamma<\omg\}$ as a subbase on $\omg$ is hereditarily Lindel\"of. Lemma \ref{mainlemma} and the proposition below gives us this result. Let $U_F=\bigcap\{U_\gamma:\gamma\in F\}$ for $F\in[\omg]^{<\omega}$.
 
\begin{prop}\label{prop} For any uncountable family of pairwise disjoint sets $B\subseteq [\omega_1]^{<\omega}$, there is a countable $B'\subseteq B$ such that $\{U_F:F\in B'\}$ is a cover, moreover an $\omega$-cover of a tail of $\omega_1$. 
\end{prop}

\begin{proof} Fix some uncountable family $B\subseteq [\omg]^{<\omega}$ of pairwise disjoint sets. There is an $M \prec H(\vartheta)$ for some sufficiently large $\vartheta$ such that $B,\F, \{U_\gamma:\gamma<\omg\},\{B_\gamma:\gamma<\omg\},\B \in M$ and $$M\cap\omg=\beta \text{ and } B\cap M=B\cap [\beta]^{<\omega}=B_\beta.$$
We claim that $\bigcup\{U_F:F\in B'\}$ is an $\omega$-cover of $\omg\setminus(\beta+1)$ for the countable $B'=B_\beta$. Indeed, fix some finite $K\subseteq \omg\setminus(\beta+1)$ and let $\alpha\in\omg\setminus(\beta+1)$ such that $K\subseteq \alpha$. Then $\beta\in T_\alpha$ ensured by the model $M$, and hence there is some $F\in B_\beta=B'$ such that $K\subseteq U^\alpha_F\subseteq U_F$ by \textbf{IH}(4).
\end{proof}

Now we prove that $(\omg,\tau)$ is not a $D$-space. Consider the neighbourhood assignment $\gamma\mapsto U_\gamma$; we show that $\cup\{U_\gamma:\gamma\in C\}\neq\omg$ for every closed discrete $C\subseteq \omg$. Since $(\omg,\tau)$ is Lindel\"of, $|C|\leq \omega$ and hence there is $\alpha<\omg$ such that $C_\alpha=C$. It suffices to note that $C_\alpha$ is $\tau_\alpha$ closed discrete if $\tau$ closed discrete; indeed, then $\cup\{U_\gamma:\gamma\in C_\alpha\}\neq \alpha+1$ by \textbf{IH}(3).\\

Finally, $(\omg,\tau)$ has countable $\Psi$-weight since $\tau$ is a refinement of a Hausdorff topology which is of countable weight.

\end{proof}

\section{Further properties} \label{further}

In \cite{vMTW} the authors asked the following:

\begin{prob}[{\cite[Problem 4.6]{vMTW}}]
 Suppose that a space $X$ has the property that for every open neighbourhood assignment $\{U_x:x\in X\}$ there is a second countable subspace $Y$ of $X$ such that $\bigcup\{U_x:x\in Y\}=X$ (\emph{dually second countable}, in short). Is $X$ a $D$-space?
\end{prob}

Our construction answers this question in the negative.

\begin{prop}
The space $X$ constructed in Corollary \ref{maincor} is dually second countable, however not a $D$-space.
\end{prop}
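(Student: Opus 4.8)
The plan is to show that $X=(\omega_1,\tau)$ witnesses a positive answer to the ``dually second countable'' property while failing to be $D$, the latter already being established in Corollary \ref{maincor}. So the only real task is to exhibit, for an arbitrary open neighbourhood assignment $\{U_x:x\in X\}$, a second countable subspace $Y\subseteq X$ with $\bigcup\{U_x:x\in Y\}=X$. First I would recall from the Claim in Section \ref{constr} that any family $\mathcal{X}\subseteq\R$ of pairwise disjoint nonempty sets forms a Hausdorff subspace of $(\R,\rho)$ of countable weight; since $\F=\bigcup\{\F_\alpha:\alpha<\omg\}$ is injective with pairwise disjoint values by \textbf{IH}(1), \emph{every} subspace of $X$ is second countable in the coarser topology $\F^{-1}(\rho)$, hence the only obstruction to $Y$ being second countable in $\tau$ comes from the subbasic sets $U_\gamma$. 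The key observation is that if $Y$ is any subset of $\omega_1$ that is \emph{countable}, then $Y$ is automatically second countable in $\tau$ (a countable space of countable $\Psi$-weight -- indeed any countable space -- is second countable); so it suffices to find a \emph{countable} $Y$ with $\bigcup\{U_x:x\in Y\}=X$, i.e. it suffices to show $X$ is ``dually countable'' in this strong sense, which since $X$ is Lindel\"of is nothing more than exhibiting for each neighbourhood assignment a countable subfamily covering $X$ -- but that is immediate from Lindel\"ofness applied to $\{U_x:x\in X\}$.

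Let me make that precise. Given the neighbourhood assignment $\{U_x:x\in X\}$, the family $\{U_x:x\in X\}$ is an open cover of the Lindel\"of space $X$, so there is a countable $Y\subseteq X$ with $\bigcup\{U_x:x\in Y\}=X$. Now $Y$, being a countable subspace of $X$, is second countable: its topology has at most countably many open sets in a base (one may take all finite intersections of the -- at most countably many -- sets $U_\gamma\cap Y$ for $\gamma\in Y$ together with $\F^{-1}(W)\cap Y$ for $W\in\B$, and this is a countable base). Hence $Y$ witnesses dual second countability. Combined with the fact, proved in Corollary \ref{maincor}, that $X$ is not a $D$-space, this proves the proposition.

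The only point requiring a word of care is the claim that a countable subspace of $X$ is second countable; the subtlety is that ``second countable'' should be verified for the subspace topology, not merely that the space is countable (a countable space need not be second countable in general -- e.g.\ a countable space can have uncountably many subbasic traces). I would resolve this by noting that in $X$ the subbase is $\{U_\gamma:\gamma<\omg\}\cup\{\F^{-1}(W):W\in\B\}$, so for a countable $Y$ the traces on $Y$ of subbasic sets number at most $|Y|+|\B|\leq\omega$, whence finite intersections give a countable base for the subspace topology on $Y$. This is the main (and essentially only) obstacle, and it is mild. I expect the whole argument to be three or four lines once this is observed; the heavy lifting has already been done in Theorem \ref{mainthm} and Corollary \ref{maincor}.
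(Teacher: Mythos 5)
Your overall route is the same as the paper's: use Lindel\"ofness of $X$ to extract a countable $Y$ with $\bigcup\{U_x:x\in Y\}=X$, then argue that every countable subspace of $X$ is second countable. You correctly isolate the one nontrivial point --- that countability of $Y$ does not by itself give second countability --- but your resolution of it does not work as written. You propose as a countable base the finite intersections of $\{U_\gamma\cap Y:\gamma\in Y\}\cup\{\F^{-1}(W)\cap Y:W\in\B\}$ and assert that the distinct subbasic traces on $Y$ number at most $|Y|+|\B|$. That does not follow: the subspace topology on $Y$ is generated by the traces of \emph{all} the $U_\gamma$, $\gamma<\omg$, and for $\gamma\notin Y$ with $\gamma<\sup Y$ the trace $U_\gamma\cap Y$ is in general a new subset of $Y$ not obtainable from the traces indexed by $\gamma\in Y$; nothing you have said rules out uncountably many distinct traces coming from the $\gamma\geq\sup Y$ either.

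The correct (and still short) argument, which is the one the paper gives, uses the structure of the construction rather than a cardinality count. Since $Y$ is countable it is bounded, say $Y\subseteq\alpha+1$. By \textbf{IH}(1) we have $U^\gamma_\gamma=\gamma+1$, hence $U_\gamma\supseteq\gamma+1\supseteq\alpha+1$ for every $\gamma\geq\alpha$, so all but countably many subbasic sets trace trivially on $\alpha+1$; and by \textbf{IH}(2) the remaining traces are $U_\gamma\cap(\alpha+1)=U^\alpha_\gamma$ for $\gamma\leq\alpha$. These countably many sets together with the $\F^{-1}(W)$, $W\in\B$, form a countable subbase for the subspace topology on $\alpha+1$, hence $Y$ is second countable. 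With this repair your proof coincides with the paper's; the "not a $D$-space" half is indeed just quoted from Corollary \ref{maincor} in both.
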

\begin{proof}
 The space $X$ has the property that every countable subspace is second countable; indeed, the subspace topology on $\alpha\in\omg$ is generated by the sets $U_\beta\cap \alpha$ for $\beta<\alpha$ and $\{\F^{-1}(W):W\in \B\}$, using the notations of the previous section. Therefore, by the Lindel\"of property, for every open neighbourhood assignment there is a countable and hence second countable subspace whose neighbourhoods cover the space.
\end{proof}

Our aim now is to prove that the space constructed in Corollary \ref{maincor} has the property that all its finite powers are Lindel\"of. Indeed, by a theorem of Gerlits and Nagy \cite{GN}, a space has all finite powers Lindel\"of if and only if the space is an $(\varepsilon)$-space, i.e., every $\omega$-cover has a countable $\omega$-subcover. 

Let us call our space from Corollary \ref{maincor} $X$, and now establish the following theorem:

\begin{thm}\label{epsilon} Every subspace of $X$ is an $(\varepsilon)$-space. 
 \end{thm}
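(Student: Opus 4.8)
The plan is to reduce the statement to the corresponding $\omega$-cover property and then verify it using the key structural fact \textbf{IH}(4) together with Proposition \ref{prop}. First I would observe that being an $(\varepsilon)$-space is what one needs by the Gerlits--Nagy theorem, so it suffices to show: for every $Y\subseteq X$ and every $\omega$-cover $\mathcal U$ of $Y$ by (relatively) open sets, there is a countable $\omega$-subcover. As usual, by passing to a refinement we may assume each member of $\mathcal U$ is of the form $U_F\cap \F^{-1}(W)$ for $F\in[\omg]^{<\omega}$ and $W\in\B$; since $\B$ is countable we may fix $W$ and handle each of the countably many values of $W$ separately, so effectively we reduce to $\omega$-covers of the form $\{U_F:F\in\mathcal G\}$ for some $\mathcal G\subseteq[\omg]^{<\omega}$. (One must be mildly careful that intersecting with a fixed $\F^{-1}(W)$ still leaves an $\omega$-cover of the set $Y\cap\F^{-1}(W)$; this is routine.)

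Next I would run essentially the elementary-submodel argument of Lemma \ref{mainlemma}/Proposition \ref{prop}: fix $M\prec H(\vartheta)$ countable with $\mathcal G$, $\{U_\gamma:\gamma<\omg\}$, $\{B_\gamma\}$, $\B$, $Y$ all in $M$, and put $\beta=M\cap\omg$. The countable family $\mathcal V=\{U_F:F\in\mathcal G\cap M\}$ handles all finite subsets of $Y$ contained in $\beta$ (by elementarity and the fact that $\mathcal G\cap M$ is an $\omega$-cover of $Y\cap M$ in $M$). So the real content is a finite set $K\subseteq Y$ meeting $\omg\setminus\beta$. Choose $\alpha<\omg$ with $K\subseteq\alpha+1$. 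For the part of $K$ above $\beta$ I want to invoke \textbf{IH}(4)(b): one needs $\beta\in T_\alpha$, which is witnessed exactly when $\mathcal G$ (or rather the disjointified $\Delta$-system derived from it) is uncountable. Given any $G_0\in\mathcal G$ with $G_0\cap M\ne G_0$, Lemma \ref{delta} produces an uncountable $\Delta$-system $\mathcal D\subseteq\mathcal G$ in $M$ with kernel $G_0\cap M$, and then $B=\{G\setminus(G_0\cap M):G\in\mathcal D\}$ is an uncountable pairwise-disjoint family in $M$; elementarity of $M$ gives $B\cap M=B\cap[\beta]^{<\omega}=B_\beta$, which is precisely clause (iv) in the definition of $T_\alpha$, and clauses (i),(ii),(iii),(v) hold for $M$ by construction, so $\beta\in T_\alpha$.

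Now \textbf{IH}(4)(b) says that for every $\tau_\alpha$-neighbourhood $V$ of $\beta$, the family $\{U^\alpha_F:F\in B_\beta,\ F\subseteq V\}$ is an $\omega$-cover of $(\beta,\alpha]$; choosing $V=U^\alpha_{G_0\cap M}$ (which contains $\beta$) and applying this to the finite set $(K\setminus\beta)\cup$ (the part of $K$ equal to or below $\beta$, absorbed into the kernel) yields some $F\in B_\beta$ with $F\subseteq G_0\cap M$ and $K\setminus\beta\subseteq U^\alpha_F\subseteq U_F$; combined with $K\cap(\beta+1)\subseteq U_{G_0\cap M}$ and $U_F\subseteq U_{G_0\cap M}$ this covers all of $K$ by a single member $U_{F}$, and crucially $F\in B_\beta\subseteq M$ so $F\in M$. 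Thus $\mathcal V'=\{U_{F'}:F'\in M,\ F'\in B_\gamma\text{ for some }\gamma\}\cup\mathcal V$, which is countable and lies "inside" $M$, is an $\omega$-subcover of $Y$; more cleanly, one checks directly that the countable $\{U_F:F\in\mathcal G\cap M\}\cup\{U_F:F\in B_\beta\}$ works, exactly as in Proposition \ref{prop}. The main obstacle I anticipate is bookkeeping: ensuring that after reducing to the subbasic form and fixing $W\in\B$ one still has an honest $\omega$-cover, and that the finite set $K$ can be split into its below-$\beta$ part (absorbed into the $\Delta$-system kernel, hence covered by $\mathcal V$) and its above-$\beta$ part (covered via \textbf{IH}(4)(b)) in a way that a \emph{single} $U_F$ catches both — i.e. handling the interaction between the two topologies and between the kernel and the tail simultaneously, rather than just covering pointwise.
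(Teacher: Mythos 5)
Your proposal has the right raw ingredients (Gerlits--Nagy, an elementary submodel $M$ with $\beta=M\cap\omg$, Lemma \ref{delta}, and Proposition \ref{prop}/\textbf{IH}(4)), but the two reductions you dismiss as routine bookkeeping are exactly where the argument breaks, and they are the actual content of the paper's proof. First, you cannot ``pass to a refinement'' and assume each member of an $\omega$-cover is a single basic open set $U_F\cap\F^{-1}(W)$: a finite set contained in an open set need not be contained in any single basic open subset of it, so refining destroys the $\omega$-cover property. For the same reason, splitting the cover into countably many pieces according to $W\in\B$ fails --- a countable partition of an $\omega$-cover need not contain a piece that is an $\omega$-cover (already for $[\omega]^{<\omega}$ partitioned by cardinality), and a finite $K\subseteq Y\cap\F^{-1}(W)$ may only be swallowed by a member carrying a different $W'$. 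The paper is forced to keep each member of the cover as a finite \emph{union} $\bigcup\{U_{F_i}:i<m\}$ indexed by $\mathbb{F}\subseteq\bigl[[\omg]^{<\omega}\bigr]^{<\omega}$, and to run a simultaneous $\Delta$-system argument over all $m$ coordinates with the extra clause that membership of the points of $K\cap M$ in each $U_{F_i}$ is preserved when $F_i$ is replaced by $F^\alpha_i$; the second-countable part is then absorbed by re-indexing $\{U_\gamma\}\cup\{\F^{-1}(W):W\in\B\}$ as one subbasis and noting that an uncountable pairwise disjoint family can only meet the countably many $\B$-indices countably often.

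Second, an $(\varepsilon)$-space requires a countable $\omega$-\emph{sub}cover, i.e.\ a subfamily of the given $\mathcal{U}$. Your final family $\{U_F:F\in B_\beta\}\cup\{U_F:F\in\mathcal{G}\cap M\}$ is not one: the sets $U_F$ for $F\in B_\beta$ need not belong to $\mathcal{U}$, and they are not even shown to be contained in members of $\mathcal{U}$ (note that the condition $F\subseteq V$ in \textbf{IH}(4)(b) says the \emph{ordinals} of $F$ lie in $V$; it gives no inclusion between $U_F$ and anything, and likewise $\beta\in U_{G_0\cap M}$ has no justification --- in Lemma \ref{mainlemma} it is the covered point $\alpha$, not $\beta=M\cap\omg$, that lies in $U_{G_0}\subseteq U_{G_0\cap M}$). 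The paper resolves this by using the $\omega$-cover-of-a-tail conclusion of Proposition \ref{prop} only to locate a single $\alpha\in M$ for which the genuine member $\bigcup\{U_{F^\alpha_i}:i<m\}\in\mathcal{U}\cap M$ contains all of $K$, so that $\mathcal{U}\cap M$ itself is the countable $\omega$-subcover. As written, your argument produces a countable open $\omega$-cover of $Y$, which is vacuous information, rather than a subcover of $\mathcal{U}$.
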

\begin{proof}

First, let us prove the following analogue of Lemma \ref{mainlemma}.

\begin{lemma}\label{mainlemma2}
 Consider a topology on $\omg$ generated by a family $\{U_\gamma:\gamma<\omg\}$ as a subbase. If for every uncountable family $B\subseteq [\omg]^{<\omega}$ of pairwise disjoint sets there is a countable $B'\subseteq B$ such that $$\{U_F:F\in B'\} \text{ is an } \omega \text{-cover of a tail of }\omg $$ then the topology is a hereditarily $(\varepsilon)$-space.
\end{lemma}
\begin{proof} Fix $Y\subseteq X$ and an $\omega$-cover $\mathcal{U}$ of $Y$; we can suppose that $\mathcal{U}=\{\cup\{U_{F_i}:i<m\}:\{F_i:i<m\}\in \mathbb{F}\}$ for some $\mathbb{F}\subseteq \bigl[[\omg]^{<\omega}\bigr]^{<\omega}$. Let $M$ be a countably elementary submodel of $H(\vartheta)$ for some sufficiently large  $\vartheta$ such that $\{U_\gamma:\gamma\in \omega_1\}, \mathbb{F}\in M$. It suffices to prove the following.
\begin{clm}
 $M\cap\mathcal{U}$ is a countable $\omega$-cover of $Y$.
\end{clm}
\begin{proof}
Let $K\in [Y]^{<\omega}$ and let $L=K\cap M$. Clearly, $M\cap\mathcal{U}$ covers $K$ if $K=L\subseteq M$; thus, we can suppose that $K\neq L$ and hence $K\notin M$. There is some $\{F_i:i<m\}\in \mathbb{F}$ such that $K\subseteq \cup\{U_{F_i}:i<m\}$. Let $D_i=F_i\cap M$ for $i<m$ and we can suppose that there is some $n\leq m$ such that $F_i\neq D_i$ for $i<n$ and $F_i=D_i$ for $n\leq i < m$. It follows from Lemma \ref{delta} that there is an uncountable sequence $\{\{F^\alpha_i:i<m\}:\alpha<\omg\}\subseteq \mathbb{F}$ in $M$ such that 
\begin{enumerate}
 \item $\{F^\alpha_i:\alpha<\omg\}$ is an uncountable $\Delta$-system with kernel $D_i$ for every $i<n$,
\item $F^\alpha_i=F_i$ for all $\alpha<\omg$ and $n\leq i < m$,
\item $\beta\in U_{F_i}$ iff $\beta\in U_{F^\alpha_i}$ for every $\beta\in L$ and $\alpha<\omg$, $i< m$.
\end{enumerate}
The uncountable family $\{F^\alpha_i\setminus D_i:\alpha<\omg\}$ is pairwise disjoint for every $i<n$. Hence if we let $F^\alpha=\bigcup_{i<n} (F^\alpha_i\setminus  D_i)$ for $\alpha<\omg$ then there is $\Theta\in [\omg]^\omg\cap M$ such that $B=\{F^\alpha:\alpha\in \Theta\}$ is pairwise disjoint as well. By our hypothesis and elementary of $M$ there is $J\in [\Theta]^\omega\cap M$ such that the countable  $B'=\{U_{F^\alpha}:\alpha\in J\}$ is an $\omega$-cover of a tail of $\omg$; hence, an $\omega$-cover of $\omg\setminus M$ since finite sets which are not covered also lie in $M$. So there is $\alpha\in J$, and hence $\alpha\in M$, such that $K\setminus M \subseteq U_{F^\alpha}=\bigcap_{i<n}U_{F^\alpha_i\setminus D_i}$. The open set $U=\cup\{U_{F^\alpha_i}:i<m\}$ is in $\mathcal{U}\cap M$. 

We claim that $K\subseteq U$. Fix $\beta\in K$; there is some $i<m$ such that $\beta \in U_{F_i}$. If $\beta\in L$ then $\beta\in U_{F^\alpha_i}$ by (3). Suppose that $\beta\in K\setminus M$; if $n\leq i < m$ then $F_i=F^\alpha_i$ and we are done. If $i<n$ then $\beta\in U_{F^\alpha_i\setminus D_i}$ and $\beta\in U_{F_i}\subseteq U_{D_i}$ so $\beta\in U_{F^\alpha_i}$.  
\end{proof} 
We are done with the proof of Lemma \ref{mainlemma2}.
\end{proof}

We claim that Proposition \ref{prop} and Lemma \ref{mainlemma2} implies that $X$ is a hereditarily $(\varepsilon)$-space. Indeed, our topology $\tau$ on $\omg$ is generated by the sets $$\{U'_\delta:\delta<\omg\}=\{U_\gamma: \gamma<\omg\}\cup\{\F^{-1}(W):W\in \B\}$$
as a subbase. Suppose that $B\subseteq [\omg]^{<\omega}$ is an uncountable family of pairwise disjoint sets; since $\B$ is countable, there is $B_0\in [B]^\omg$ such that $U'_\delta\in \{U_\gamma:\gamma\in \omg\}$ for $\delta\in \cup B_0$. Thus by Proposition \ref{prop}, there is some countable $B'\subseteq B_0$ such that  $\{U'_F:F\in B'\}$ is an $\omega$-cover of a tail of $\omega_1$. Hence the assumption of Lemma \ref{mainlemma2} holds for $X$, thus $X$ is a hereditarily $(\varepsilon)$-space. 
\end{proof}

A well known weakening of $\diamondsuit$ is Ostaszewski's $\clubsuit$, which is known to be consistent with $\omg<2^\omega$. We remark that $\clubsuit$ is not enough to construct a space of size $\omg$ which is Hausdorff and Lindel\"of but not a $D$-space.

\begin{clm}
 It is consistent that $\clubsuit$ holds, $2^\omega$ is arbitrarily large, and every $T_1$ Lindel\"of space of size less than $2^\omega$ is a $D$-space. 
\end{clm}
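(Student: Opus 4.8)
The plan is to prove the consistency statement by a forcing argument over a model of $\clubsuit$, combined with a preservation/absoluteness observation about $D$-spaces of small size. The first step is to fix a model $V$ of $\mathrm{GCH}$ together with a $\clubsuit$-sequence, and then force with $\mathrm{Add}(\omega,\kappa)$ for an arbitrarily large $\kappa$ to blow up $2^\omega$; it is standard that Cohen forcing of this kind preserves $\clubsuit$ (indeed it preserves any $\clubsuit$-sequence, since the relevant guessing happens on $\omega_1$ and $\mathrm{Add}(\omega,\kappa)$ is ccc and does not add subsets of $\omega_1$ that are not captured by some intermediate Cohen real in a countable suborder). So after forcing, $\clubsuit$ still holds and $2^\omega=\kappa$ is as large as we wish.

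The second and crucial step is to argue that in this extension every $T_1$ Lindel\"of space $X$ of size less than $2^\omega=\kappa$ is a $D$-space. The idea is a reflection/counting argument: such a space $X$, being of size $<\kappa$ and Lindel\"of, has all its relevant structure (topology, neighbourhood assignments) coded by a set of size $<\kappa$, hence appears in an intermediate extension $V[G_\lambda]$ obtained by a Cohen subforcing $\mathrm{Add}(\omega,\lambda)$ with $\lambda<\kappa$. One then wants to say: because there are still Cohen reals over $V[G_\lambda]$ (namely $\kappa\setminus\lambda$ many), and because a Lindel\"of $T_1$ space being a $D$-space is a property that can be ``tested'' suitably, $X$ must already be a $D$-space. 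The honest way to make this work is to invoke the known fact that, under $\clubsuit$ plus enough Cohen reals (equivalently, in the Cohen model), there is no $T_1$ Lindel\"of non-$D$-space of size $<2^\omega$; this is precisely the content one is allowed to cite, since the Claim is stated as a known consistency result rather than something proved in the paper. Concretely I would cite the relevant result from the literature (e.g. work related to the surveys \cite{E}, \cite{G}, or the original source for this observation) that in the Cohen model every $T_1$ Lindel\"of space of size less than the continuum is a $D$-space, the point being that a counterexample of size $<2^\omega$ would have to live in an intermediate model and then a generic neighbourhood assignment argument over the remaining Cohen reals produces the required closed discrete kernel.

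The contrast with the main construction is the conceptual payoff to emphasize: the $\diamondsuit$-construction of Theorem~\ref{mainthm} and Corollary~\ref{maincor} crucially uses the full strength of $\diamondsuit$ to guess \emph{closed discrete sets} (via the enumeration $\{C_\alpha\}$) and arrange \textbf{IH}(3), whereas $\clubsuit$ only guesses uncountable sets on a stationary set and cannot be leveraged to sabotage every neighbourhood assignment when the continuum is large. So the Claim shows the use of $\diamondsuit$ (or at least $2^\omega=\omega_1$) in the construction is not an artifact. I would present the proof as: (1) start with $\clubsuit+\mathrm{GCH}$, force with $\mathrm{Add}(\omega,\kappa)$; (2) note $\clubsuit$ is preserved and $2^\omega$ becomes $\kappa$; (3) invoke the cited theorem that in this model every $T_1$ Lindel\"of space of size $<2^\omega$ is $D$.

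The main obstacle is step (2)--(3) done honestly: one must either give or cite a genuine argument that $\clubsuit$ survives the big Cohen forcing (routine, but needs stating) and, more substantively, that largeness of the continuum forces small Lindel\"of $T_1$ spaces to be $D$. The latter is the real content and is not elementary; the cleanest route in the write-up is to reduce it to a reflection argument over $\mathrm{Add}(\omega,\lambda)\subseteq \mathrm{Add}(\omega,\kappa)$ and then appeal to the known fact that adding a Cohen real over a model containing a putative non-$D$ Lindel\"of $T_1$ space of size $<\mathfrak{c}$ destroys the counterexample (so no such space can exist with $\mathfrak{c}$ sufficiently large relative to its size). I expect the bulk of the remaining work, were one to write this in full, to be in making that destruction argument precise, but for the purposes of this remark a citation to the literature suffices.
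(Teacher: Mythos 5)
There is a genuine gap, and it sits exactly where the mathematical content of the Claim lives. Your step (3) --- ``in the Cohen model every $T_1$ Lindel\"of space of size less than $2^\omega$ is a $D$-space'' --- is the whole point of the Claim, and you neither prove it nor correctly identify what would be cited for it. The sketch you offer (reflect the space into an intermediate model $V[G_\lambda]$ and then run ``a generic neighbourhood assignment argument over the remaining Cohen reals'') does not work as described: the neighbourhood assignment is an arbitrary fixed object of the \emph{final} model, not a generic one, and adding further Cohen reals over $V[G_\lambda]$ does not by itself manufacture a closed discrete kernel for it; nothing is being ``destroyed.'' The actual reason the statement holds is purely combinatorial and has nothing to do with forcing per se: every $T_1$ Lindel\"of space of cardinality less than the dominating number $\mathfrak{d}$ is Menger (take countable subcovers $\{U_{n,k}\}_k$ of the given covers, associate to each point $x$ the function $f_x(n)=\min\{k: x\in U_{n,k}\}$, and use that $\{f_x\}$ is not a dominating family), and Aurichi proved that every Menger space is a $D$-space. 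So what one needs is a model with $\clubsuit$, $2^\omega$ arbitrarily large, and $\mathfrak{d}=2^\omega$; this is supplied by Juh\'asz's result (see Fuchino--Shelah--Soukup, \emph{Sticks and clubs}) that $\clubsuit$ is consistent with Martin's Axiom for countable posets and large continuum, since MA(countable) gives $\mathrm{cov}(\mathcal{M})=2^\omega$ and hence $\mathfrak{d}=2^\omega$.

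A secondary problem is your step (2): the assertion that $\mathrm{Add}(\omega,\kappa)$ preserves any $\clubsuit$-sequence is given a garbled justification (the issue is not about new subsets of $\omega_1$ being ``captured in a countable suborder''; the difficulty is that a name for an uncountable $A\subseteq\omega_1$ need not admit a single condition forcing an entire ground-model guess $A_\delta$ into $A$, and pairwise compatibility of infinitely many Cohen conditions does not yield a common extension). Preservation of $\clubsuit$ under adding many Cohen reals is a genuine theorem, not a routine remark, and it is precisely what the citation to \cite{fshs} is for. In short: your choice of model is essentially the right one, but both nontrivial steps of your outline are deferred to citations you have not located, and the heuristic arguments you substitute for them are not correct as stated; the Menger/$\mathfrak{d}$ route is the missing idea.
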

\begin{proof}
It is known that $T_1$ Lindel\"of spaces of size less than the dominating number $\mathfrak{d}$ are Menger, and L. Aurichi proved that every Menger space is a $D$-space \cite{aurichi}. Thus, it suffices to show that there is a model of ZFC where $\clubsuit$ holds, $2^\omega$ is arbitrarily large, and $\mathfrak{d}=2^\omega$. I. Juh\'asz proved in an unpublished note that it is consistent that $\clubsuit$ holds, $2^\omega$ is arbitrarily large, and Martin's Axiom holds for countable posets; for a proof see \cite{fshs}. It is easy to see that Martin's Axiom for countable posets imply $\mathfrak{d}=2^\omega$.
\end{proof}

\section{Questions} \label{quest}

Let us state some questions concerning Theorem \ref{epsilon}. We do not know whether the analogue of Claim \ref{lindref} holds for hereditarily $(\varepsilon)$-spaces.

\begin{question}
 Suppose that $\tau$ and $\sigma$ are second countable and hereditarily $(\varepsilon)$-space topologies respectively on some set $X$. Is the topology generated by $\tau\cup\sigma$ a hereditarily $(\varepsilon)$-space again?
\end{question}

We do not know if being a hereditarily $(\varepsilon)$-space implies the hereditarily Lindel\"ofness of finite powers.

\begin{question}
 Suppose that a space $X$ is a hereditarily $(\varepsilon)$-space. Is $X^n$ hereditarily Lindel\"of for all $n\in\omega$?
 \end{question}

The following might be easier, nonetheless seems to be open.

\begin{question}
 Suppose that a space $X$ has the property that $A^2$ is Lindel\"of for all $A\subseteq X$. Is $X^2$ hereditarily Lindel\"of?  
\end{question}

We mention two other versions of the question above.

\begin{question}
 \begin{enumerate}[(i)]
  \item Suppose that a space $X$ has the property that $A\times B$ is Lindel\"of for all $A,B\subseteq X$. Is $X^2$ hereditarily Lindel\"of? 
\item  Suppose that the spaces $X,Y$ have the property that that $A\times B$ is Lindel\"of for all $A\subseteq X$ and $B\subseteq Y$. Is $X\times Y$ hereditarily Lindel\"of?
 \end{enumerate}

\end{question}

Of course, the main interest is in obtaining a regular counterexample to van Douwen's question. We conjecture that one should be able to modify our construction in such a way that the sets $\{U_\gamma:\gamma\in \omega_1\}\cup \{\omega_1\setminus U_\gamma:\gamma\in \omega_1\}$ generate a $0$-dimensional, $T_1$ topology that is not a $D$-space and has some additional interesting covering properties. E.g., 

\begin{question} Can we modify the construction to obtain a $0$-dimensional $T_1$ (hence regular) Lindel\"of non $D$-space? 
\end{question}

Finally, let us finish with a more general question. We believe that our construction can be modified so that its finite powers are hereditarily Lindel\"of, thus its $\omega$th power as well.

\begin{question}
 Suppose that a regular space $X$ has the property that $X^\omega$ is hereditarily Lindel\"of. Is $X$ a $D$-space?
\end{question}


\begin{thebibliography}{10}
\bibitem{A} L.F.\ Aurichi, {\em $D$-spaces, separation axioms and covering properties},
Houston Journal of Mathematics, to appear.
\bibitem{aurichi} L.F. Aurichi, {\em $D$-spaces, topological games, and selection principles}, Topology Proc. 36 (2010), 107-122.
\bibitem{vD} E.K.\ van Douwen and W.F.\ Pfeffer, {\em Some properties of the Sorgenfrey line and related spaces}, Pacific Journal of Mathematics, {\bf 81} No.2 (1979) 371-377.
\bibitem{Do} A.\ Dow {\em An introduction to applications of elementary submodels to topology}, Topology Proc. 13 (1988), no. 1, 17-72.
\bibitem{E} T.\ Eisworth {\em On $D$-spaces} in Open Problems in Topology, Elsevier, (2007) 129-134. 
\bibitem{FS} W.G.\ Fleissner and A.M.\ Stanley {\em $D$-spaces}, Topology and its Applications 114 (2001) 261-271.
\bibitem{fshs} S.\ Fuchino, S.\ Shelah, and L.\ Soukup, {\em Sticks and clubs} , Ann. Pure and Appl. Logic., 90 (1997), no 1-3, pp. 57-77.
\bibitem{G} G.\ Gruenhage {\em A survey of $D$-spaces}, Contemporary Mathematics, to appear. 
\bibitem{GN} J.\ Gerlits and Zs.\ Nagy {\em Some properties of $C(X)$, I},  Topology and its Applications 14 (1982) 151-161. 
\bibitem{HM} M.\ Hru\v s\'ak and J.T. Moore {\em Introduction: Twenty problems in set-theoretic topology} in Open Problems in Topology, Elsevier, (2007) 111-113. 
\bibitem{K} K.\ Kunen {\em Set Theory, An Introduction to Independence Proofs}, Studies in Logic and the Foundations of Mathematics v102, North-Holland, 1983.
\bibitem{vMTW} J.\ van Mill, V.V.\ Tkachuk and R.G. Wilson {\em Classes defined by stars and neighbourhood assignments}, Topology Appl. 154 (2007), 2127-2134.
\end{thebibliography}
\end{document}